\DeclareRobustCommand*\cal{\@fontswitch\relax\mathcal}
 \newtheorem{thm}{Theorem}[section]
 \newtheorem{lem}[thm]{Lemma}
 \newtheorem{prop}[thm]{Proposition}
 \theoremstyle{definition}
 \theoremstyle{remark}
 \newtheorem{rem}[thm]{Remark}
 \newtheorem*{notation}{Notation}
 \numberwithin{equation}{section}
\begin{document}

%-------------------------------------------------------------------------
% editorial commands: to be inserted by the editorial office
%
%\firstpage{1} \volume{228} \Copyrightyear{2004} \DOI{003-0001}
%
%
%\seriesextra{Just an add-on}
%\seriesextraline{This is the Concrete Title of this Book\br H.E. R and S.T.C. W, Eds.}
%
% for journals:
%
%\firstpage{1}
%\issuenumber{1}
%\Volumeandyear{1 (2004)}
%\Copyrightyear{2004}
%\DOI{003-xxxx-y}
%\Signet
%\commby{inhouse}
%\submitted{March 14, 2003}
%\received{March 16, 2000}
%\revised{June 1, 2000}
%\accepted{July 22, 2000}
%
%
%
%---------------------------------------------------------------------------
%Insert here the title, affiliations and abstract:
%

\title[Nonlinear Schrödinger equation on the half-line]
 {Nonlinear Schrödinger equation on the half-line with nonlinear boundary condition}

%----------Author 1
\author{Ahmet Batal}

\address{Department of Mathematics\\ Izmir Institute of Technology\\Izmir, TURKEY}

\email{ahmetbatal@iyte.edu.tr}

\thanks{The work of Türker Özsarı was supported by the Izmir Institute of Technology
under the BAP grant 2015İYTE43.}
%----------Author 2
\author{Türker Özsarı}
\address{Department of Mathematics\\ Izmir Institute of Technology\\Izmir, TURKEY}
\email{turkerozsari@iyte.edu.tr}
%----------classification, keywords, date
\subjclass{35Q55; 35A01; 35A02; 35B30}

\keywords{}

\date{}
%----------additions
\dedicatory{}
%%% ----------------------------------------------------------------------

\begin{abstract}
In this paper, we study the initial boundary value problem for nonlinear Schrödinger equations on the half-line with nonlinear boundary conditions of type $u_x(0,t)+\lambda|u(0,t)|^ru(0,t)=0,$ $\lambda\in\mathbb{R}-\{0\}$, $r> 0$.  We discuss the local well-posedness when the initial data $u_0=u(x,0)$ belongs to an $L^2$-based inhomogeneous Sobolev space $H^s(\mathbb{R}_+)$  with $s\in \left(\frac{1}{2},\frac{7}{2}\right)-\{\frac{3}{2}\}$.  We deal with the nonlinear boundary condition by first studying the linear Schrödinger equation with a time-dependent inhomogeneous Neumann boundary condition $u_x(0,t)=h(t)$ where $h\in H^{\frac{2s-1}{4}}(0,T)$.  This latter problem is studied by adapting the method of Bona-Sun-Zhang \cite{BonaSunZhang2015} to the case of inhomogeneous Neumann boundary conditions.
\end{abstract}

%%% ----------------------------------------------------------------------
\maketitle
%%% ----------------------------------------------------------------------
%\tableofcontents

\section{Introduction and Main Result}\label{Intro}
The nonlinear Schrödinger equation (NLS) is a fundamental dispersive partial differential equation. NLS can be used in many physical nonlinear systems such as quantum many body systems, optics, hydrodynamics, acoustics, quantum condensates, and heat pulses in solids.

In this article, we consider the following nonlinear Schrödinger equation with nonlinear boundary condition on the (right) half-line.
\begin{equation}\label{MainProb} \left\{ \begin{array}{ll}
         i\partial_t u + \partial_x^2u+k|u|^pu= 0, & \mbox{$x\in \mathbb{R}_+$, $t\in (0,T)$},\\
         u(x,0)=u_0(x),\\
         \partial_xu(0,t)+\lambda|u(0,t)|^ru(0,t)=0,\end{array} \right.
         \end{equation} where $u(x,t)$ is a complex valued function, the real variables
         $x$ and $t$ are space and time coordinates, and $\partial_t,\partial_x$ denote partial derivatives with respect to time and space.  The constant parameters satisfy $k,\lambda\in\mathbb{R}-\{0\}$, and $p,r> 0$.  When $\lambda=0$, the boundary condition reduces to the classical homogeneous Neumann boundary condition. When $r=0$, the boundary condition is the classical homogeneous Robin boundary condition.  When $\lambda$ and $r$ are both non-zero as in the present case, the boundary condition can be considered as a nonlinear variation of the Robin boundary condition.

Our main goal is to solve the classical local well-posedness problem for \eqref{MainProb}.  More precisely, we will prove the local existence and uniqueness for \eqref{MainProb} together with the continuous dependence of solutions on the initial data $u_0$, which is taken from an $L^2-$based inhomogeneous Sobolev space $H^s(\mathbb{R}_+)$ with ${s\in\left(\frac{1}{2},\frac{7}{2}\right)-\left\{\frac{3}{2}\right\}}.$  We will also deduce a blow-up alternative for the solutions of \eqref{MainProb} in the $H^s$-sense.

The well-posedness problem will be considered in the function space $X_T^s$, which is the set of those elements in $$C([0,T];H^s(\mathbb{R}_+))\cap C(\mathbb{R}_+^x;H^{\frac{2s+1}{4}}(0,T))$$ that are bounded with respect to the norm ${\|\cdot\|_{X_T^s}}$.   This norm is defined by \begin{equation}\label{Xtsnorm}\|f\|_{X_T^s}:=\sup_{t\in[0,T]}\|f(\cdot,t)\|_{H^s(\mathbb{R_+})}+\sup_{x\in\mathbb{R}_+}\|f(x,\cdot)\|_{H^{\frac{2s+1}{4}}(0,T)}.\end{equation}

It is well-known that the trace operators $\gamma_0:u_0\rightarrow u_0(0)$ and $\gamma_1:u_0\rightarrow u_0'(0)$ are well-defined on $H^s(\mathbb{R}_+)$ when $s>\frac{1}{2}$ and $s>\frac{3}{2}$, respectively.  Therefore, both $u_0(0)$ and $u_0'(0)$ make sense if $s>\frac{3}{2}$.  Hence, we will assume the compatibility condition $u_0'(0)=-\lambda|u_0(0)|^ru_0(0)$ when $s>\frac{3}{2}$ on the initial data to comply with the desire that the solution be continuous at $(x,t)=(0,0)$.

Now we can state our main result.
\begin{thm}[Local well-posedness]\label{MainResult1} Let $T>0$ be arbitrary, $s\in \left(\frac{1}{2},\frac{7}{2}\right) -\left\{\frac{3}{2}\right\}$,  $p,r>0$, $k,\lambda\in\mathbb{R}-\{0\}$, $u_0\in H^s(\mathbb{R_+})$ together with ${u_0'(0)=-\lambda|u_0(0)|^ru_0(0)}$ whenever $s>\frac{3}{2}$.  We in addition assume the following restrictions on $p$ and $r$:
\begin{itemize}
  \item[(A1)]  If $s$ is integer, then $p\ge s$ if $p$ is an odd integer and $[p]\ge s-1$ if $p$ is non-integer.
  \item[(A2)]  If $s$ is non-integer, then $p>s$ if $p$ is an odd integer and $[p]\ge [s]$ if $p$ is non-integer.
  \item[(A3)] $r>\frac{2s-1}{4}$ if $r$ is an odd integer and $[r]\ge \left[\frac{2s-1}{4}\right]$ if $r$ is non-integer.
\end{itemize}  Then, the following hold true.
\begin{itemize}
  \item[(i)] Local Existence and Uniqueness: There exists a unique local solution $u\in X_{T_0}^s$ of \eqref{MainProb} for some $T_0=T_0\left(\|u_0\|_{H^s(\mathbb{R}_+)}\right)\in (0,T]$.
  \item[(ii)] Continuous Dependence: If $B$ is a bounded subset of $H^s(\mathbb{R}_+)$, then there is $T_0>0$ (depends on the diameter of $B$) such that the flow $u_0\rightarrow u$ is Lipschitz continuous from $B$ into $X_{T_0}^s$.
  \item[(iii)] Blow-up Alternative: If $S$ is the set of all $T_0\in (0,T]$ such that there exists a unique local solution in $X_{T_0}^s$, then whenever $\displaystyle T_{max}:=\sup_{T_0\in S}T_0<T$, it must be true that ${\displaystyle\lim_{t\uparrow T_{max}}\|u(t)\|_{H^s(\mathbb{R}_+)}=\infty}$.
\end{itemize}
 \end{thm}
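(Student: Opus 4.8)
The plan is to recast \eqref{MainProb} as a fixed point problem whose linear backbone is the inhomogeneous Neumann problem for the free Schrödinger operator, and then to absorb both the interior nonlinearity $k|u|^pu$ and the boundary nonlinearity $\lambda|u(0,t)|^ru(0,t)$ by a contraction on a small time interval. \textbf{Step 1 (linear theory).} I would first establish, for $s\in\left(\frac12,\frac72\right)-\{\frac32\}$, the solvability of
$$i\partial_t u+\partial_x^2u=f,\qquad u(x,0)=u_0(x),\qquad \partial_xu(0,t)=h(t),\qquad x\in\mathbb{R}_+,\ t\in(0,T),$$
together with the estimate
$$\|u\|_{X_T^s}\le C\Big(\|u_0\|_{H^s(\mathbb{R}_+)}+\|h\|_{H^{\frac{2s-1}{4}}(0,T)}+\|f\|_{\mathcal F_T^s}\Big)$$
for a suitable forcing space $\mathcal F_T^s$ (for instance $L^1(0,T;H^s(\mathbb{R}_+))$, after reducing to a whole-line problem by extension). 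Following Bona--Sun--Zhang \cite{BonaSunZhang2015}, one writes the solution as the sum of the whole-line evolution of a good extension of $u_0$, a Duhamel term for $f$, and a boundary carrier that repairs the Neumann trace; the boundary carrier is expressed through a Riemann--Liouville fractional integral and an explicit oscillatory boundary operator, and the estimate is obtained by splitting into low frequencies, the oscillatory regime, and the exponentially decaying regime, using Kato smoothing and sharp trace (maximal-in-$x$) bounds. The index $\frac{2s-1}{4}$ is forced by $\partial_xu$ losing half a spatial derivative relative to $u$, whose time trace lives in $H^{\frac{2s+1}{4}}$; the half-integer value $s=\frac32$ and the endpoints are excluded exactly for the trace-theoretic reasons recalled before the statement.

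\textbf{Step 2 (contraction).} Fix $R\sim\|u_0\|_{H^s(\mathbb{R}_+)}$ and, for $v$ in the ball $B_R=\{v\in X_{T_0}^s:\|v\|_{X_{T_0}^s}\le R\}$, let $\Phi(v)$ be the solution furnished by Step 1 with data $u_0$, forcing $f_v:=ik|v|^pv$ and boundary term $h_v(t):=-\lambda|v(0,t)|^rv(0,t)$; note $v(0,\cdot)\in H^{\frac{2s+1}{4}}(0,T_0)$ by the very definition of $X_{T_0}^s$, so $h_v$ makes sense. A fixed point of $\Phi$ is a solution of \eqref{MainProb}. Two nonlinear estimates drive the argument: first, $\|f_v\|_{\mathcal F_{T_0}^s}\le T_0\,\Lambda(R)$, which follows because $H^s(\mathbb{R}_+)$ is an algebra for $s>\frac12$ and $z\mapsto|z|^pz$ composes with $H^s$ functions --- assumptions (A1)--(A2) being precisely the requirement that $|z|^pz$ be smooth enough as a map on $\mathbb{R}^2$ for this Moser-type composition estimate to hold; second, $\|h_v\|_{H^{\frac{2s-1}{4}}(0,T_0)}\le T_0^{\theta}\,\Lambda(R)$ for some $\theta>0$, obtained by a fractional chain-rule estimate for $z\mapsto|z|^rz$ (valid under (A3)) followed by interpolation with an $L^2_t$ bound on the bounded interval $(0,T_0)$, which is what produces the gain $T_0^\theta$ since $\frac{2s+1}{4}>\frac{2s-1}{4}$. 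Choosing $T_0$ small (depending only on $R$, hence only on $\|u_0\|_{H^s(\mathbb{R}_+)}$) makes $\Phi:B_R\to B_R$, and the same estimates applied to differences $\Phi(v_1)-\Phi(v_2)$, using that both nonlinearities are locally Lipschitz between the relevant spaces, make $\Phi$ a contraction after possibly shrinking $T_0$. This yields the unique local solution, i.e.\ assertion (i).

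\textbf{Step 3 (continuous dependence and blow-up).} For (ii), if $u_1,u_2$ solve \eqref{MainProb} with initial data in a bounded set $B$, then $w=u_1-u_2$ solves the linear problem with datum $u_{0,1}-u_{0,2}$, forcing $ik(|u_1|^pu_1-|u_2|^pu_2)$ and boundary term $-\lambda(|u_1(0,\cdot)|^ru_1(0,\cdot)-|u_2(0,\cdot)|^ru_2(0,\cdot))$; the linear estimate of Step 1 together with the Lipschitz bounds of Step 2 gives $\|w\|_{X_{T_0}^s}\le C\,\|u_{0,1}-u_{0,2}\|_{H^s(\mathbb{R}_+)}$ with $T_0$ depending only on $\mathrm{diam}(B)$, which is the asserted Lipschitz dependence. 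For (iii), the existence time $T_0$ in (i) depends only on $\|u_0\|_{H^s(\mathbb{R}_+)}$, and any $t^\ast<T_{max}$ has $u(t^\ast)\in H^s(\mathbb{R}_+)$ satisfying $\partial_xu(0,t^\ast)=-\lambda|u(0,t^\ast)|^ru(0,t^\ast)$, i.e.\ the compatibility condition is automatically verified at $t^\ast$; hence if $\limsup_{t\uparrow T_{max}}\|u(t)\|_{H^s(\mathbb{R}_+)}<\infty$ one could restart \eqref{MainProb} from a time $t^\ast$ close to $T_{max}$ on an interval of fixed positive length and extend the solution past $T_{max}$, contradicting its maximality; therefore $\lim_{t\uparrow T_{max}}\|u(t)\|_{H^s(\mathbb{R}_+)}=\infty$.

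\textbf{Main obstacle.} The genuinely hard part is Step 1: proving the linear estimate for the Neumann boundary operator uniformly over the whole range $s\in\left(\frac12,\frac72\right)-\{\frac32\}$ --- in particular for $s$ near $\frac72$, where one essentially controls three spatial derivatives while the boundary data sits near $H^{3/2}$ in time and the compatibility bookkeeping is most delicate --- and isolating the half-integer obstruction at $s=\frac32$. The second delicate point is the boundary nonlinear estimate in Step 2: fractional Sobolev spaces on a finite interval fail to be algebras in the low-index regime and $z\mapsto|z|^rz$ is only finitely smooth, so obtaining simultaneously the estimate and the decisive factor $T_0^\theta$ is what dictates the exact form of (A3). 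The remaining ingredients --- the Moser estimates for $|v|^pv$ governed by (A1)--(A2), the abstract contraction, and the continuation argument --- are routine once the linear machinery and these two nonlinear bounds are in place.
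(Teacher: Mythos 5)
Your proposal is correct and follows essentially the same route as the paper: the linear Neumann theory is built by adapting the Bona--Sun--Zhang boundary operator, the contraction runs in a ball of $X_{T_0}^s$ with Moser-type estimates for $|v|^pv$ under (A1)--(A2), and the decisive factor $T_0^\theta$ for the boundary nonlinearity comes from exactly the interpolation you describe, exploiting the gap between $H^{\frac{2s+1}{4}}(0,T_0)$ (the time trace of $X_{T_0}^s$) and the required $H^{\frac{2s-1}{4}}(0,T_0)$, which is the paper's Lemma \ref{fTEst}. The continuation and continuous-dependence arguments likewise match the paper's.
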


\begin{rem} If $s=1$ or $p$ is even, then the assumptions on $p$  given in (A1)-(A2) in Theorem \ref{MainResult1} are redundant.  The same remark applies to $r$ when $s=5/2-\epsilon$ or $r$ is even.
\end{rem}

\begin{rem}In the above theorem, when $s\ge 2$, the equation is understood in the $L^2-$sense.  However, if $s<2$, the equation should be understood in the distributional sense, namely in the sense of $H^{s-2}(\mathbb{R}_+)$.  For low values of $s$, the boundary and the initial condition can be understood in the sense of Definition 2.2 in \cite{BonaSunZhang2015}.\end{rem}

\subsection*{Literature Overview}

To the best of our knowledge, the model given in \eqref{MainProb} has only been studied in the case that $k=0, \lambda=1$, and $r>0$ by Ackleh-Deng \cite{AcklehDeng2004}.  In \cite{AcklehDeng2004}, the main equation was only linear.  More precisely, the authors studied the following.
  \begin{equation}\label{ADModel} \left\{ \begin{array}{ll}
         i\partial_t u + \partial_x^2u= 0, & \mbox{$x\in \mathbb{R}_+$, $t\in (0,T)$},\\
         u(x,0)=u_0(x),\\
         \partial_xu(0,t)+|u(0,t)|^ru(0,t)=0.\end{array} \right.
         \end{equation}
Ackleh-Deng \cite{AcklehDeng2004} proved that if $u_0\in H^3(\mathbb{R}_+)$, then there is $T_0>0$ such that \eqref{ADModel} possesses a unique local solution $u\in C([0,T_0);H^1(\mathbb{R}_+))$.  Moreover, it was shown in \cite{AcklehDeng2004} that (large) solutions with negative initial energy blow-up if $r\ge 2$ and are global otherwise.  Therefore, $r=2$ was considered the critical exponent for \eqref{ADModel}.  Obtaining local existence and uniqueness consisted of two steps.  First, the authors studied the linear Schrödinger equation with an inhomogeneous Neumann boundary condition on the half-line.  Secondly, they used a contraction argument once the representation formula for solutions was restricted to the boundary point $x=0$.  In other words, the contraction argument was used on a function space which included only time dependent elements.    Unfortunately, the same technique cannot be applied in the presence of the nonlinear source term $f(u)=k|u|^pu$ in the main equation.  The reason is that even if the representation formula can still be restricted to the point $x=0$, the sought after fixed point in the representation formula would also depend on the space variable.  Therefore, one can no longer use a simple contraction argument on a function space which includes only time dependent elements.  We are thus motivated to use a contraction argument on a function space which includes elements that depend on both time and space variables.  Of course, this requires nice linear and nonlinear space-time estimates.

The drawback of the technique used in \cite{AcklehDeng2004} is that the initial data has been assumed to be too smooth compared to the regularity of the solutions obtained.  It is well-known from the theory of the linear Schrödinger equation that solutions are of the same class as the initial state.  From this point of view, the generation of $H^1$ solutions with $H^3$ data seems far from optimal.  We are thus inclined to obtain a regularity theory which shows that $H^s$ initial data generates $H^s$ solutions.

Regarding nonlinear boundary conditions, we are aware of very few other results for Schrödinger equations, see for example \cite{LasieckaTriggiani2006} and \cite{Wellington}.  In \cite{LasieckaTriggiani2006}, the authors study the Schrödinger equation with nonlinear, attractive, and dissipative boundary conditions of type $\frac{\partial u}{\partial \nu}=ig(u)$ where $g$ is a monotone function with the property that the corresponding evolution operator generates a strongly continuous contraction semigroup on the $L^2$-level.  The more recent paper \cite{Wellington} studies Schrödinger equation with Wentzell boundary conditions.  This work also uses the fact that the Wentzell boundary condition provide a semigroup in an appropriate topology.  In the present case, due to the fact that $\lambda$ is not a purely imaginary number, the problem does not have a monotone structure, and the method of \cite{LasieckaTriggiani2006}, \cite{Wellington} cannot be applied here.

A common strategy for proving well-posedness of solutions to PDEs with nonlinear terms relies on two classical steps: (1) obtain a good linear theory with non-homogeneous terms; (2) establish local well-posedness for the nonlinear model by a fixed point argument.

Obtaining a good linear theory with non-homogeneous terms is a subtle point for boundary value problems, especially those with low-regularity boundary data.  One might attempt to extend the boundary data into the domain and homogenize the boundary condition.  However, this approach in general requires a high regularity boundary data (\cite{Bu1994}, \cite{Bu2000}), as opposed to the rough boundary situation as in the present paper for low values of $s$.  There are different approaches one can follow to study a linear PDE with an inhomogeneous boundary data on the half-line without employing an extension-homogenization approach, though.  For example, Colliander-Kenig \cite{CollianderKenig2002} used a technique on the KdV equation by replacing the given initial-boundary value problem with a forced initial value problem where the forcing is chosen in such a way that the boundary condition is satisfied by inverting a Riemann-Liouville fractional integral.  Holmer \cite{Holmer2005} applied this technique on nonlinear Schrödinger equations with inhomogeneous Dirichlet boundary conditions on the half line.  A second approach is to obtain norm estimates on solutions by using a representation formula, which can be easily  obtained through a Laplace/Fourier transform.  This technique has been used for example by Kaikina in \cite{Kaikina2013} for nonlinear Schrödinger equations with inhomogeneous Neumann boundary conditions and by Bona-Sun-Zhang in \cite{BonaSunZhang2015} for inhomogeneous Dirichlet boundary conditions.  In \cite{Kaikina2013}, the well-posedness result assumes the smallness of the given initial-boundary data while the results of \cite{BonaSunZhang2015} have global character in this sense.

Although nonlinear Schrödinger equations with inhomogeneous boundary conditions have been studied to some extent, most of these papers were devoted to inhomogeneous Dirichlet boundary conditions; see \cite{CarrollBu1991}, \cite{Bu1992}, \cite{Bu1994}, \cite{BuShullWangChu2001}, \cite{Bu2000A}, \cite{StraussBu2001}, \cite{BuTsutayaZhang2005}, \cite{Holmer2005}, \cite{Ozsari2011}, \cite{Kaikina2013D}, \cite{BonaSunZhang2015}, \cite{Ozsari2012}, \cite{Ozsari2015}.  There are relatively less results on inhomogeneous Neumann boundary conditions; see \cite{Bu1994}, \cite{Bu2000}, \cite{Kaikina2013}, \cite{Ozsari2013}, \cite{Ozsari2015}.  In \cite{Bu1994} and \cite{Bu2000}, well-posedness is obtained under smooth boundary data.  Relatively less smooth boundary data was treated in \cite{Ozsari2015} using Strichartz estimates, but the regularity results were not optimal.    In \cite{Kaikina2013}, the smallness of initial and boundary data was crucial.  In \cite{Ozsari2013}, the focus was on the existence of weak solutions, and questions concerning continuity in time, uniqueness, and continuous dependence on data were not studied.  In the present paper, we draw a more complete and optimal well-posedness picture where the spatial domain is half-line.

\subsection*{Orientation}

In this paper, we will follow a step-by-step approach to prove Theorem \ref{MainProb}:

\emph{Step 1}: We will first study the linear Schrödinger equation with  inhomogeneous terms both in the main equation and in the boundary condition. This problem is written in \eqref{MainLinNonHom}.  Our aim in this step is to derive optimal norm estimates with respect to regularities of the initial state $u_0$, boundary data $h$, and nonhomogeneous source term $f$.  This linear theory is constructed in Section \ref{SectionLin} by adapting the method of \cite{BonaSunZhang2015} to nonhomogeneous Neumann boundary conditions.

\emph{Step 2}: In the second step, we will replace the nonhomogeneous source term $f=f(x,t)$ in \eqref{MainLinNonHom} with $f=f(u)=k|u|^pu$ as in \eqref{MainProb2}.  We will use a contraction mapping argument to prove the existence and uniqueness of local solutions  together with continuous dependence on data.  The blow-up alternative will be obtained via a classical extension-contradiction argument. This step is treated in Sections \ref{LocExt} - \ref{BlowSec}.

\emph{Step 3}: In this step, we will replace the boundary data $h=h(x,t)$ in \eqref{MainLinNonHom} with $h=h(u)=-\lambda|u(0,t)|^ru(0,t)$, and $f$ with $k|u|^pu$.  Arguments similar to those in Step 2 will eventually give the well-posedness in the presence of nonlinear boundary conditions.  The only difference is that the contraction argument must be adapted to deal with the nonlinear effects due to the nonlinear boundary source.  This  is given in Section \ref{SectionNonlin2}.

\begin{rem}Step 2 is indeed optional.  One can directly run the contraction and blow-up arguments with nonlinear boundary conditions.  However, it is useful to include the general theory of nonlinear Schrödinger equations with inhomogeneous Neumann boundary conditions to study other related problems in the future. \end{rem}

\section{Linear nonhomogeneous model}\label{SectionLin}
In this section, we study the nonhomogeneous linear Schrödinger equation with nonhomogeneous Neumann boundary condition. We will later apply this linear theory to obtain the local well-posedness for nonlinear Schrödinger equations first with inhomogeneous Neumann boundary conditions and then with nonlinear boundary conditions.  In order to obtain a sufficiently nice linear theory, we adapt the method presented for nonhomogeneous Dirichlet boundary conditions in \cite{BonaSunZhang2015} to the case with nonhomogeneous Neumann boundary conditions.

%\subsection{Nonhomogeneous Neumann data}

We consider the following linear model
\begin{equation}\label{MainLinNonHom} \left\{ \begin{array}{ll}
         i\partial_t u + \partial_x^2u + f = 0, x\in \mathbb{R}_+, t\in (0,T),\\
         u(x,0)=u_0, \partial_xu(0,t)= h(t),\end{array} \right.
\end{equation} where $f$ and $h$ lie in appropriate  function spaces.

\subsection{Compatibility conditions} Suppose $u_0\in H^{s}(\mathbb{R}_+)$, $h\in H^{\frac{2s-1}{4}}(0,T)$ in \eqref{MainLinNonHom}. It is well-known from the trace theory that both $u_0'(0)$ and $h(0)$ make sense when $s>\frac{3}{2}$.  Therefore, one needs to assume the zeroth order compatibility condition $$u_0'(0)=h(0)$$ when $s\in\left(\frac{3}{2},\frac{7}{2}\right)$ in order to get continuous solutions at $(x,t)=(0,0)$.  As the value of $s$ gets higher, one needs to consider more compatibility conditions.  For example, if $s\in \left(2k+\frac{3}{2},2(k+1)+\frac{3}{2}\right)$ ($k\ge 1$), then the $k$-th order compatibility condition is defined inductively: $$\varphi_0=u_0, \varphi_{n+1}=i(\partial_t^{n}f|_{t=0}+\partial_x^2\varphi_n),$$ $$\partial_t^kh|_{t=0}=\partial_x\varphi_k|_{x=0}$$ provided that $f$ is also smooth enough for traces to make sense.  If one wants to add the end point cases $s=2k+\frac{3}{2}$ to the analysis, then \emph{global compatibility conditions} must be assumed (see for example \cite{Audiard2015} for a discussion of local and global compatibility conditions in the case of Dirichlet boundary conditions).

\subsection{Boundary operator}
We will first deduce a representation formula for solutions of the following linear model with an inhomogeneity on the boundary.

\begin{equation}\label{LinNonHom} \left\{ \begin{array}{ll}
         i\partial_t u + \partial_x^2u = 0, x\in \mathbb{R}_+, t\in (0,T),\\
         u(x,0)=0, \partial_xu(0,t)= h(t).\end{array} \right.
\end{equation}

We will study the above model by constructing an evolution operator which acts on the boundary data.  We will start by taking a Laplace (in time) - Fourier (in space) transform of the given model.   In order to do that, we will first extend the boundary data to the whole line utilizing the following lemma.
\begin{lem}[Extension]\label{Extension}Let $s\in \left(\frac{1}{2},\frac{7}{2}\right)-\{\frac{3}{2}\}$, $h\in H^\frac{2s-1}{4}(0,T)$ with $h(0)=0$ if $s>\frac{3}{2}$.  Then, there exists $h_e\in H^\frac{2s-1}{4}$ with compact support in $[0,2T+1)$ which extends $h$ so that ${\cal{H}}(t):=\int_{-\infty}^th_e(s)ds$ also has compact support in $[0,2T+1)$ and $\|{\cal{H}}\|_{H^{\frac{2s+3}{4}}}\le C(1+T)\|h\|_{H^\frac{2s-1}{4}(0,T)}$ for some $C>0$ which is independent of $T$. \end{lem}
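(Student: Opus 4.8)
Throughout write $\sigma:=\tfrac{2s-1}{4}$, so that the hypotheses give $\sigma\in\left(0,\tfrac32\right)\setminus\{\tfrac12\}$, $\tfrac{2s+3}{4}=\sigma+1$, and $h(0)=0$ is assumed exactly when $\sigma>\tfrac12$. The plan is to build $h_e$ in three moves — an even reflection of $h$ across $t=T$, an extension by zero, and a correction that forces zero mean — and then to read off all the claimed properties of $\mathcal H$ by elementary Fourier and Cauchy–Schwarz estimates. The only genuinely delicate point will be the $T$-uniformity of the extension constant; everything else is bookkeeping.

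\emph{Reflection and extension by zero.} First set $h_1:=h$ on $(0,T)$ and $h_1(t):=h(2T-t)$ on $(T,2T)$. Since $\sigma<\tfrac32$, membership in $H^\sigma$ across $t=T$ is governed only by the zeroth-order trace, and the two pieces share the value $h(T)$ there (no condition when $\sigma<\tfrac12$), so $h_1\in H^\sigma(0,2T)$ with $\|h_1\|_{H^\sigma(0,2T)}\le C\|h\|_{H^\sigma(0,T)}$. Moreover $h_1(0)=h(0)$ and $h_1(2T)=h(0)$, both $0$ when $\sigma>\tfrac12$; hence the extension of $h_1$ by zero to $\mathbb R$, call it $h_2$, lies in $H^\sigma(\mathbb R)$, is supported in $[0,2T]$, equals $h$ on $(0,T)$, and obeys $\|h_2\|_{H^\sigma(\mathbb R)}\le C\|h\|_{H^\sigma(0,T)}$. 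Both bounds are instances of the standard Sobolev gluing/extension theory on intervals (for $\tfrac12<\sigma<\tfrac32$ one uses precisely that $h(0)=0$, via weighted fractional Hardy estimates, and for $\sigma\in(1,\tfrac32)$ one differentiates down to exponent $\sigma-1\in(0,\tfrac12)$), and the constant $C=C(s)$ can be taken \emph{independent of $T$} by rescaling to the fixed interval $(0,1)$, the Gagliardo seminorms and the relevant Hardy integrals being scale-homogeneous while the $L^2$ pieces are merely doubled under reflection. \textbf{Securing this $T$-uniformity is the main obstacle of the lemma.}

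\emph{The mean-zero correction.} For $\mathcal H(t)=\int_{-\infty}^t h_e$ to be compactly supported we must arrange $\int_{\mathbb R}h_e=0$. Fix once and for all $\phi\in C_c^\infty\big((0,\tfrac12)\big)$ with $\int\phi=1$, put $\phi_T(t):=\phi(t-2T)$ and $m:=\int_{\mathbb R}h_2$, and define $h_e:=h_2-m\,\phi_T$. Then $h_e$ extends $h$ (as $\phi_T\equiv0$ on $(0,T)$), $\operatorname{supp}h_e\subseteq[0,2T+\tfrac12]\subset[0,2T+1)$, and $\int_{\mathbb R}h_e=0$. Since $|m|\le\sqrt{2T}\,\|h_2\|_{L^2}\le C\sqrt T\,\|h\|_{H^\sigma(0,T)}$ and $\|\phi_T\|_{H^\sigma(\mathbb R)}=\|\phi\|_{H^\sigma(\mathbb R)}$ is a fixed constant, this gives $\|h_e\|_{H^\sigma(\mathbb R)}\le C(1+\sqrt T)\|h\|_{H^\sigma(0,T)}$.

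\emph{Properties of $\mathcal H$ and the final estimate.} As $h_e$ is compactly supported and in $L^2$, it is in $L^1$, so $\mathcal H$ is continuous; it vanishes for $t<0$ and, since $\int_{\mathbb R}h_e=0$, also for $t\ge2T+\tfrac12$, whence $\operatorname{supp}\mathcal H\subseteq[0,2T+\tfrac12]\subset[0,2T+1)$. Next $\mathcal H'=h_e\in H^\sigma$, so $\mathcal H\in H^1$ with $\widehat{\mathcal H}(\xi)=\widehat{h_e}(\xi)/(i\xi)$ for $\xi\ne0$; splitting $\int(1+\xi^2)^{\sigma+1}|\widehat{\mathcal H}(\xi)|^2\,d\xi$ over $|\xi|\le1$ and $|\xi|\ge1$ (using $(1+\xi^2)/\xi^2\le2$ on the latter) yields $\|\mathcal H\|_{H^{\sigma+1}(\mathbb R)}\le C\big(\|\mathcal H\|_{L^2}+\|h_e\|_{H^\sigma}\big)$. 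Finally, on $[0,2T]$ the corrector is absent, so $|\mathcal H(t)|=\big|\int_0^t h_2\big|\le\|h_2\|_{L^1}\le\sqrt{2T}\,\|h_2\|_{L^2}\le C\sqrt T\,\|h\|_{H^\sigma(0,T)}$, giving $\|\mathcal H\|_{L^2([0,2T])}\le C\,T\,\|h\|_{H^\sigma(0,T)}$, while on $[2T,2T+\tfrac12]$ one has $|\mathcal H(t)|\le|m|\le C\sqrt T\,\|h\|_{H^\sigma(0,T)}$. Adding these bounds for $\|\mathcal H\|_{L^2}$ and combining with $\|h_e\|_{H^\sigma}\le C(1+\sqrt T)\|h\|_{H^\sigma(0,T)}\le C(1+T)\|h\|_{H^\sigma(0,T)}$ produces $\|\mathcal H\|_{H^{\frac{2s+3}{4}}}\le C(1+T)\|h\|_{H^{\frac{2s-1}{4}}(0,T)}$ with $C$ independent of $T$, completing the proof.
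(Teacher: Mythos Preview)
Your proof is correct and arrives at the same conclusion, but the construction differs from the paper's in several respects worth noting. The paper treats the two regimes $\sigma\in(0,\tfrac12)$ and $\sigma\in(\tfrac12,\tfrac32)$ separately: in the first it zero-extends $h$ directly, in the second it first extends to $\mathbb{R}$ via the infimum definition, restricts to $(0,\infty)$, zero-extends (using $h(0)=0$), and multiplies by a cutoff. In both cases it then achieves mean zero by the translate trick $h_e(t)=h_1(t)-h_1(t-a)$, which automatically kills the integral without any estimate on $m=\int h_1$. You instead handle both regimes uniformly via an even reflection across $t=T$ (exploiting that $\sigma<\tfrac32$ so only zeroth-order traces matter at the seam) and then zero-extend at both ends simultaneously; mean zero is obtained by subtracting the explicit bump $m\phi_T$, which costs you the Cauchy--Schwarz bound $|m|\le C\sqrt{T}\|h\|_{L^2}$. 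For the final estimate the paper invokes Poincar\'e on the compactly supported $\mathcal H$, while you bound $|\mathcal H(t)|$ pointwise and integrate. Your route is somewhat more self-contained on the $T$-uniformity issue (the scale-homogeneity argument for the Gagliardo/Hardy pieces is transparent), whereas the paper's translate device is a cleaner way to force $\int h_e=0$. Either construction is perfectly adequate for the lemma. One cosmetic point: to make $|\mathcal H(t)|\le|m|$ on $[2T,2T+\tfrac12]$ literally true you want $\phi\ge0$; otherwise replace $|m|$ by $(1+\|\phi\|_{L^1})|m|$, which changes nothing.
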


\begin{proof}If $\frac{1}{2}<s<\frac{3}{2}$, we have $0<\frac{2s-1}{4}<\frac{1}{2}$. Now we take the zero extension of $h$ onto $\mathbb{R}$, say we get $h_0$.  Then we set $h_e(t):=h_0(t)-h_0(t-T).$

If $s\in \left(\frac{3}{2},\frac{7}{2}\right)$, then $\frac{1}{2}<\frac{2s-1}{4}<\frac{3}{2}$. In this case, we first take an extension $h_{A}$ of $h$ onto $\mathbb{R}$ so that $\|h_{A}\|_{H^\frac{2s-1}{4}}\le 2\|h\|_{H^\frac{2s-1}{4}(0,T)}$ by using the fact that \begin{equation}\label{infDef}\|h\|_{H^\frac{2s-1}{4}(0,T)}:=\inf\left\{\|\phi\|_{H^\frac{2s-1}{4}}: \phi\in H^\frac{2s-1}{4}, \phi|_{(0,T)}=h\right\}.\end{equation}  Secondly, the restriction $h_B:=h_A|_{(0,\infty)}\in H^\frac{2s-1}{4}(0,\infty)$ will satisfy {$\|h_{B}\|_{H^\frac{2s-1}{4}(0,\infty)}\le \|h_A\|_{H^\frac{2s-1}{4}}$}.  Now we can take the zero extension, say $h_C$, of $h_B$ onto $\mathbb{R}$ so that $\|h_{C}\|_{H^\frac{2s-1}{4}}\le C\|h_B\|_{H^\frac{2s-1}{4}(\mathbb{R}_+)}$ with $C$ independent of $T$.  By the previous inequalities, we get $\|h_C\|_{H^\frac{2s-1}{4}}\le C\|h\|_{H^\frac{2s-1}{4}(0,T)}$ with $C$ independent of $T$.  Then we pick a function $\eta\in C_c^\infty(\mathbb{R})$ so that $\eta=1$ on $(0,T)$ and $\eta=0$ on $[T+1/2,\infty)$. Now we consider $h_1=\eta h_C$, which is of course in $H^\frac{2s-1}{4}$, since $H^\frac{2s-1}{4}$ is a Banach algebra when $s>\frac{3}{2}$. Finally, we set $h_e(t)= h_1(t)-h_1(t-T-1/2).$

Note that $\|h_e\|_{H^\frac{2s-1}{4}}\leq C \|h\|_{H^\frac{2s-1}{4}(0,T)}$ where the positive constant $C$ does not depend on $T$, since all the extensions in the above paragraph and the multiplication by $\eta$ are continuous operators between corresponding Sobolev spaces whose norms do not depend on the initial domain $(0,T)$. Moreover, we set up $h_e$ in such a way that its average is zero. Hence, its antiderivative ${\cal{H}}(t):=\int_{-\infty}^th_e(s)ds$ is compactly supported and therefore belongs to the space $H^{\frac{2s+3}{4}}.$

Since $\cal{H}$ is compactly supported with support in $[0, 2T+1)$ by the Poincaré inequality we have $\|{\cal{H}}\|_{L^2}\leq (2T+1)\|h_e\|_{L^2}.$ Hence \begin{multline}\|{\cal{H}}\|_{H^{\frac{2s+3}{4}}}\simeq \|D^{\frac{2s+3}{4}}{\cal{H}}\|_{L^2}+ \|{\cal{H}}\|_{L^2}\leq C\|D^{\frac{2s-1}{4}}{h_e}\|_{L^2}+(2T+1)\|h_e\|_{L^2}\\\leq C(1+T)\|h_e\|_{H^\frac{2s-1}{4}}\leq C(1+T)\|h\|_{H^\frac{2s-1}{4}(0,T)}\end{multline} for some $C>0$.
\end{proof}

Now we consider the following model, which is an extended-in-time version of \eqref{LinNonHom}.
\begin{equation}\label{ELinNonHom} \left\{ \begin{array}{ll}
         i\partial_t u_{e} + \partial_x^2u_{e} = 0, x\in \mathbb{R}_+, t>0,\\
         u_{e}(x,0)=0, \partial_xu_{e}(0,t)= h_{e}(t)\end{array} \right.
\end{equation} where $h_e$ is the extension of $h$, as in Lemma \ref{Extension}.

We first take the Laplace transform of \eqref{ELinNonHom} in $t$ to get
\begin{equation}\label{LT-LinNonHom} \left\{ \begin{array}{ll}
         i\lambda\tilde{u}_{e}(x,\lambda) + \partial_x^2\tilde{u}_{e}(x,\lambda)= 0,\\
         \tilde{u}_{e}(+\infty,\lambda)=0, \partial_x\tilde{u}_{e}(0,\lambda)= \tilde{h}_e(\lambda)\end{array} \right.
\end{equation} with $\text{Re } \lambda>0$, where $\tilde{u}_e$ denotes the Laplace transform of ${u}_e$. The solution of \eqref{LT-LinNonHom} is $$\tilde{u}_{e}(x,\lambda)=\frac{1}{r(\lambda)} \exp(r(\lambda)x)\tilde{h}_{e}(\lambda)$$ where $\text{Re }r(\lambda)$ solves $i\lambda+r^2=0$ together with $\text{Re }r<0$.  Then, $$u_{e}(x,t)=\frac{1}{2\pi i}\int_{-\infty i+\gamma}^{+\infty i+\gamma}\exp(\lambda t)\frac{1}{r(\lambda)} \exp(r(\lambda)x)\tilde{h}_{e}(\lambda)d\lambda,$$ where $\gamma>0$ (fixed), solves \eqref{LT-LinNonHom}.  By passing to the limit in $\gamma$ as $\gamma\rightarrow 0$ and applying change of variables, we can rewrite $u(x,t)$ as follows:

\begin{multline}u_{e}(x,t)=\frac{1}{i\pi}\int_0^\infty\exp(-i\beta^2 t+i\beta x)\tilde{h}_{e}(-i\beta^2)d\beta\\
-\frac{1}{\pi}\int_0^\infty\exp(i\beta^2 t-\beta x)\tilde{h}_{e}(i\beta^2)d\beta.\end{multline}

Note, that $u:=u_e|_{[0,T)}$ is a solution of \eqref{LinNonHom}.  We define $\nu_1(\beta):=\frac{1}{i\pi}\tilde{h}_{e}(-i\beta^2)$ for $\beta\ge 0$ and zero otherwise.  Let $\phi_{h_e}$ be the inverse Fourier transform of $\nu_1$, that is $\hat{\phi}_{h_{e}}(\beta)=\nu_1(\beta)$ for $\beta\in \mathbb{R}$.  Similarly, we define $\nu_2(\beta):=-\frac{1}{\pi}\tilde{h}_{e}(i\beta^2)$ for $\beta\ge 0$ and zero otherwise.  Let $\psi_{h_e}$ be the inverse Fourier transform of $\nu_2$, that is $\hat{\psi}_{h_{e}}(\beta)=\nu_2(\beta)$ for $\beta\in \mathbb{R}$.  Now, for $x\in\mathbb{R}_+$, we can write
$$u_{e}(x,t)=[W_b(t)h_{e}](x):=[W_{b,1}(t)h_{e}](x)+[W_{b,2}(t)h_{e}](x)$$ where
$$[W_{b,1}(t)h_{e}](x):=\int_{-\infty}^\infty\exp(-i\beta^2 t+i\beta x)\hat{\phi}_{h_{e}}(\beta)d\beta$$
and
$$[W_{b,2}(t)h_{e}](x):=\int_{-\infty}^\infty\exp(i\beta^2 t-\beta x)\hat{\psi}_{h_{e}}(\beta)d\beta.$$

Note that we can extend $W_{b,1}(t)h_{e}$ to $\mathbb{R}$ without changing its definition.  For such an extension we have the following lemma:
\begin{lem}\label{Wb1Lem} $u(x,t)=[W_{b,1}(t)h_{e}](x)$ solves the initial value problem $$i\partial_tu+\partial_x^2u=0, u(x,0)=\phi_{h_{e}}(x), x\in\mathbb{R},t\in\mathbb{R}_+.$$
\end{lem}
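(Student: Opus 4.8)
The plan is to verify directly that $u(x,t) = [W_{b,1}(t)h_e](x)$ satisfies both the Schrödinger equation and the prescribed initial condition by unwinding the Fourier-type representation. First I would observe that $[W_{b,1}(t)h_e](x) = \int_{-\infty}^\infty e^{-i\beta^2 t + i\beta x}\hat\phi_{h_e}(\beta)\,d\beta$ is exactly the classical free Schrödinger propagator $e^{it\partial_x^2}$ applied to the function whose Fourier transform is $\hat\phi_{h_e}$; indeed, by definition $\hat\phi_{h_e}(\beta) = \nu_1(\beta)$, so the integrand at $t=0$ integrates to $\phi_{h_e}(x)$ by Fourier inversion, giving the initial condition. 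For the equation itself, I would differentiate under the integral sign: $i\partial_t$ brings down $i\cdot(-i\beta^2) = \beta^2$, while $\partial_x^2$ brings down $(i\beta)^2 = -\beta^2$, so the two contributions cancel pointwise in $\beta$ and the integral vanishes. This is the entire content of the lemma.

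The one point requiring care is the justification of differentiation under the integral sign and the sense in which the equation holds, since $\hat\phi_{h_e}(\beta) = \frac{1}{i\pi}\tilde h_e(-i\beta^2)$ is supported on $\beta \ge 0$ and its decay is only what one inherits from $h_e \in H^{\frac{2s-1}{4}}$ via the Laplace transform. I would make the change of variables $\beta^2 = \mu$ (on $\beta > 0$) to relate $\|\hat\phi_{h_e}\|_{L^2_\beta}$-type quantities, and more precisely weighted versions $\int (1+\beta^2)^{a}|\hat\phi_{h_e}(\beta)|^2\,d\beta$, to Sobolev norms of $h_e$; this is the kind of estimate that will be proved in detail in the subsequent norm-estimate lemmas of Section~\ref{SectionLin}, so here it suffices to note that $\hat\phi_{h_e}$ lies in a weighted $L^2$ space with enough weight to make the manipulations legitimate, at worst interpreting the PDE distributionally as already flagged in the remark for low $s$. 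Thus the verification is formal and then upgraded by density/continuity.

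I expect no serious obstacle: the lemma is essentially a bookkeeping statement extracting the ``dispersive'' half of the boundary representation formula and recognizing it as a free-evolution Cauchy problem. The only mild subtlety is that $W_{b,1}(t)h_e$ was originally defined for $x \in \mathbb{R}_+$ and is here reinterpreted on all of $\mathbb{R}$ — but since the defining integral makes sense verbatim for $x \in \mathbb{R}$, this extension is immediate and requires no new argument, exactly as noted in the sentence preceding the lemma. So the proof amounts to: (1) identify the integral as $e^{it\partial_x^2}\phi_{h_e}$; (2) check the initial condition by Fourier inversion at $t=0$; (3) differentiate under the integral and observe the $\beta^2 - \beta^2$ cancellation, with the regularity of $\phi_{h_e}$ justifying the interchange.
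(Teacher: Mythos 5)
Your proposal is correct and follows essentially the same route as the paper: the paper's proof is precisely the direct computation $i\partial_t u+\partial_x^2 u=[i(-i\beta^2)+(i\beta)^2][W_{b,1}(t)h_e](x)=0$ together with $u(x,0)=\mathcal{F}^{-1}(\hat\phi_{h_e})=\phi_{h_e}$. Your additional remarks on justifying differentiation under the integral and the weighted-$L^2$ control of $\hat\phi_{h_e}$ go slightly beyond the paper's formal calculation but do not change the argument.
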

\begin{proof} By direct calculation, we have $$i\partial_tu+\partial_x^2u=[i(-i\beta^2)+(i\beta)^2][W_{b,1}(t)h_{e}](x)=0,$$ and $$u(x,0)={\mathcal{F}}^{-1}(\hat{\phi}_{h_{e}})(x)=\phi_{h_{e}}(x).$$
\end{proof}

We deduce from the above lemma that we can get space time estimates on $W_{b,1}(t)h_{e}$ by using the well-known linear theory of Schrödinger equations on $\mathbb{R}.$  These estimates are given in Section \ref{Rtheory}.  We extend $[W_{b,2}(t)h_{e}](x)$ to $\mathbb{R}$ by setting $$[W_{b,2}(t)h_{e}](x):=\int_{-\infty}^\infty\exp(i\beta^2 t-\beta |x|)\hat{\psi}_{h_{e}}(\beta)d\beta.$$  However, if $s>\frac{3}{2}$, then this extension would not be differentiable at $x=0$.  Therefore, if $s>\frac{3}{2}$, we cannot directly use the linear theory of Schrödinger equations on $\mathbb{R}$ to estimate various norms of the term $W_{b,2}(t)h_{e}$.  This makes it necessary to obtain space-time estimates for $W_{b,2}(t)h_{e}$ directly by using its definition.

The relation between regularities of $\phi_{h_{e}},\psi_{h_{e}}$ and the regularity of the boundary data $h$ is given by the following lemma.

\begin{lem}\label{phivsh} Let $s\ge \frac{1}{2}$, $h\in H^{\frac{2s-1}{4}}(0,T)$ such that $h(0)=0$ if $s>\frac{3}{2}$. Then, $\phi_{h_{e}}, \psi_{h_{e}}\in H^s$.
\end{lem}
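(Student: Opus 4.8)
The plan is to trace through the chain of transforms that define $\phi_{h_e}$ and $\psi_{h_e}$ from $h_e$, and to show at each stage that the relevant weighted-$L^2$ norm is controlled by $\|h\|_{H^{(2s-1)/4}(0,T)}$ via Lemma \ref{Extension}. Recall $\widehat{\phi}_{h_e}(\beta)=\frac{1}{i\pi}\widetilde{h}_e(-i\beta^2)$ for $\beta\ge 0$ and $0$ otherwise, and similarly $\widehat{\psi}_{h_e}(\beta)=-\frac{1}{\pi}\widetilde{h}_e(i\beta^2)$ for $\beta\ge 0$. The key observation is that the Laplace transform of $h_e$ evaluated on the imaginary axis is (up to constants) the Fourier transform: since $h_e$ is compactly supported (hence in $L^1\cap L^2$) with zero mean and antiderivative $\mathcal{H}\in H^{(2s+3)/4}$, we have $\widetilde{h}_e(\pm i\beta^2) = \widehat{h_e}(\mp \beta^2)$ in the appropriate sense. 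Thus the relation $\beta\mapsto \pm\beta^2$ is the change of variables linking the Fourier variable $\tau$ dual to time with the Fourier variable $\beta$ dual to space.

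First I would make this precise: for $\psi_{h_e}$, using $\widehat{\psi}_{h_e}(\beta)= -\tfrac1\pi \widehat{h_e}(-\beta^2)\mathbf{1}_{\beta\ge 0}$, we compute
\begin{equation}\label{psinorm}
\|\psi_{h_e}\|_{H^s}^2 \simeq \int_0^\infty (1+\beta^2)^s\,|\widehat{h_e}(-\beta^2)|^2\,d\beta = \frac12\int_0^\infty (1+\tau)^s\,|\widehat{h_e}(-\tau)|^2\,\tau^{-1/2}\,d\tau,
\end{equation}
after substituting $\tau=\beta^2$, $d\tau = 2\beta\,d\beta$. Since $(1+\tau)^s\tau^{-1/2}\lesssim (1+\tau)^{s-1/2}+ (1+\tau)^{s}\tau^{-1/2}\mathbf{1}_{\tau\le 1}$, and $\int_0^1 (1+\tau)^s\tau^{-1/2}|\widehat{h_e}(-\tau)|^2 d\tau \lesssim \|\widehat{h_e}\|_{L^\infty}^2 \lesssim \|h_e\|_{L^1}^2 < \infty$ because $h_e$ is compactly supported, the full integral in \eqref{psinorm} is bounded by
\begin{equation}\label{psibound}
\|\psi_{h_e}\|_{H^s}^2 \lesssim \int_{\mathbb{R}} (1+|\tau|)^{s-1/2}|\widehat{h_e}(\tau)|^2\,d\tau + \|h_e\|_{L^1}^2 \simeq \|h_e\|_{H^{(2s-1)/4}}^2 + \|h_e\|_{L^1}^2,
\end{equation}
wait — the exponent in the first term on the right is $(s-1/2)/1$ as a \emph{Sobolev} exponent, i.e. it corresponds to $H^{(s-1/2)/1}$; but $H^{(2s-1)/4}$ has weight exponent $(2s-1)/2 = s - 1/2$. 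Indeed $(1+|\tau|)^{(2s-1)/2} = (1+|\tau|)^{s-1/2}$, so the first term is exactly $\|h_e\|_{H^{(2s-1)/4}}^2$. Combined with Lemma \ref{Extension} (which gives $h_e\in H^{(2s-1)/4}$) and the fact that compact support plus $H^{(2s-1)/4}\hookrightarrow L^1$ (valid since $(2s-1)/4>0$ and the support is bounded) controls $\|h_e\|_{L^1}$, we conclude $\psi_{h_e}\in H^s$. The argument for $\phi_{h_e}$ is identical, replacing $\widehat{h_e}(-\beta^2)$ by $\widehat{h_e}(\beta^2)$ and substituting $\tau=\beta^2$ over $\beta\ge0$ again.

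The point that requires care — and which I expect to be the main obstacle — is the low-frequency behavior, i.e. the integrable singularity $\tau^{-1/2}$ introduced by the Jacobian of $\beta\mapsto\beta^2$ near $\beta=0$. This is exactly why the hypothesis $h(0)=0$ (when $s>3/2$) and the zero-mean construction of $h_e$ in Lemma \ref{Extension} are needed: without them one could not even guarantee $\widehat{h_e}$ is bounded near $\tau=0$ with enough decay, and more importantly the antiderivative $\mathcal{H}$ would fail to be compactly supported, breaking the identification $\widetilde{h}_e(\pm i\beta^2)=\widehat{h_e}(\mp\beta^2)$. Since $(2s-1)/4 < 1/2$ exactly when $s<3/2$, in that regime no vanishing condition is needed and $L^\infty$ control of $\widehat{h_e}$ via $\|h_e\|_{L^1}$ suffices; for $s>3/2$ one has $(2s-1)/4>1/2$ so $\widehat{h_e}$ decays fast enough that, together with $\widehat{h_e}(0)=\int h_e = 0$, the factor $\tau^{-1/2}$ near $\tau=0$ is harmless. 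Thus in both ranges of $s$ the integral converges, and $\|\phi_{h_e}\|_{H^s}+\|\psi_{h_e}\|_{H^s} \lesssim (1+T)\|h\|_{H^{(2s-1)/4}(0,T)}$, which proves the lemma (and records the constant dependence that will be used later).
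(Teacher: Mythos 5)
Your proof is correct, and it handles the one delicate point of the lemma --- the Jacobian singularity $\tau^{-1/2}$ produced by the substitution $\tau=\beta^2$ --- by a different mechanism than the paper. The paper never splits frequencies: it writes $\hat{h}_e(\beta)=i\beta\hat{\mathcal{H}}(\beta)$, so that the weight $(1+\beta)^s\beta^{-1/2}|\hat h_e(\beta)|^2=(1+\beta)^s\beta^{3/2}|\hat{\mathcal H}(\beta)|^2\lesssim(1+\beta^2)^{\frac{2s+3}{4}}|\hat{\mathcal H}(\beta)|^2$, and the whole norm is absorbed into $\|\mathcal{H}\|_{H^{(2s+3)/4}}$, which Lemma \ref{Extension} controls by $(1+T)\|h\|_{H^{(2s-1)/4}(0,T)}$. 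You instead separate $\tau\le 1$ from $\tau\ge 1$, bound the high frequencies directly by $\|h_e\|_{H^{(2s-1)/4}}^2$ (the exponent bookkeeping $(1+\tau)^{s-1/2}\simeq(1+\tau^2)^{(2s-1)/4}$ is right), and kill the integrable singularity at low frequency with $\|\hat h_e\|_{L^\infty}\le\|h_e\|_{L^1}\lesssim(1+T)^{1/2}\|h_e\|_{L^2}$. Both routes are valid and give the same $(1+T)$-dependence; yours is more elementary in that it needs only $h_e\in H^{(2s-1)/4}$ with compact support, not the antiderivative $\mathcal{H}$ at all. One small correction to your closing discussion: your own argument shows that the zero-mean construction of $h_e$ is \emph{not} needed for this lemma --- boundedness of $\hat h_e$ near $\tau=0$ follows from compact support alone, and $\tau^{-1/2}$ is integrable there regardless of whether $\hat h_e(0)=0$. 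The condition $h(0)=0$ for $s>3/2$ is needed so that the (cut-off) zero extension actually lands in $H^{(2s-1)/4}$ when that exponent exceeds $1/2$, and the zero-mean property is what makes $\mathcal{H}$ compactly supported, which the paper's proof of this lemma and the subsequent trace estimates genuinely use; neither plays the role you attribute to it in the low-frequency estimate.
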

\begin{proof} \begin{multline}\label{Phie1}\|\phi_{h_{e}}\|_{H^s}^2=\int_{-\infty}^\infty(1+\beta^2)^{s}|\hat{\phi}_{h_{e}}(\beta)|^2d\beta\\
=\frac{1}{\pi^2}\int_0^\infty(1+\beta^2)^{s}|\tilde{h}_{e}(-i\beta^2)|^2d\beta.\end{multline}

Upon change of variables, the last term in \eqref{Phie1} can be rewritten and estimated as follows.
\begin{multline}\frac{1}{2\pi^2}\int_0^\infty\frac{(1+\beta)^{s}}{\beta^{\frac{1}{2}}}|\tilde{h}_{e}(-i\beta)|^2d\beta\lesssim \frac{1}{\pi^2}\int_0^\infty{(1+\beta^2)^{\frac{2s+3}{4}}}|\hat{\mathcal{H}}(\beta)|^2d\beta\\
\lesssim \|\mathcal{H}\|_{H^{\frac{2s+3}{4}}}^2\end{multline} where we use the relationships, $\tilde{h}_{e}(-i\beta)=\hat{h}_e(\beta)$ and $\hat{h}_e(\beta)=i\beta\hat{\mathcal{H}}(\beta)$ in the first inequality.  The last estimate combined with Lemma \ref{Extension} implies that $\phi_{h_{e}}\in H^s$.  We can repeat the same argument for $\psi_{h_{e}}$, too.

\end{proof}

\begin{notation} A given pair $(q,r)$ is said to be \emph{admissible} if $\frac{1}{q}+\frac{1}{2r}=\frac{1}{4}$ for $q,r\ge 2$.
\end{notation}

Now, we will present several space-time estimates for the second part of the evolution operator $W_b(t)$.

\begin{lem}[Space Traces] Let $s\ge \frac{1}{2}$ and $T>0$.  Then, there exists $C>0$ (independent of $T$) such that \begin{equation}\label{Wb2Est2} \sup_{t\in [0,T]}\|W_{b,2}(\cdot)h_{e}\|_{H^s}\le C(1+T) \|h\|_{H^{\frac{2s-1}{4}}(0,T)}\end{equation}
for any $h\in H^{\frac{2s-1}{4}}(0,T)$ with $h(0)=0$ if $s>\frac{3}{2}$.
\end{lem}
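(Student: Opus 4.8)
The plan is to estimate the $H^s$ norm of $W_{b,2}(t)h_e$ directly from its defining integral
$$[W_{b,2}(t)h_{e}](x)=\int_{-\infty}^\infty\exp(i\beta^2 t-\beta |x|)\hat{\psi}_{h_{e}}(\beta)d\beta,$$
recalling that $\hat\psi_{h_e}$ is supported on $\beta\ge 0$, so the factor $\exp(-\beta|x|)=\exp(-\beta x)$ on $\mathbb{R}_+$ is a genuine exponential decay in $x$. The natural idea is to write $W_{b,2}(t)h_e$ as a superposition over $\beta$ of the decaying exponentials $e^{-\beta x}$, and to use that for each fixed time these exponentials are controlled in $H^s(\mathbb{R}_+)$ by $\beta$-dependent weights matching the Sobolev exponent $\frac{2s-1}{4}$ after the parabolic scaling $\beta\mapsto\beta^2$.

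**First I would** reduce to controlling $\|W_{b,2}(t)h_e\|_{H^s(\mathbb{R}_+)}$ by its $L^2$ norm together with $\|\partial_x^m W_{b,2}(t)h_e\|_{L^2}$ for integer $m$ bracketing $s$ (and then interpolate), using $\partial_x^j[W_{b,2}(t)h_e](x)=\int_0^\infty e^{i\beta^2 t}(-\beta)^j e^{-\beta x}\hat\psi_{h_e}(\beta)\,d\beta$. For the $L^2_x$ norm I would apply Minkowski's integral inequality in $\beta$ or, more efficiently, compute directly: by Plancherel in $x$ after extending suitably, or by the elementary bound $\int_0^\infty\left|\int_0^\infty g(\beta)e^{-\beta x}d\beta\right|^2dx\lesssim \int_0^\infty\frac{|g(\beta)|^2}{\beta}d\beta$ (a Hilbert-type/Hardy inequality for the Laplace transform), applied with $g(\beta)=\beta^j e^{i\beta^2 t}\hat\psi_{h_e}(\beta)$. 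This gives
$$\|\partial_x^j W_{b,2}(t)h_e\|_{L^2(\mathbb{R}_+)}^2\lesssim \int_0^\infty \beta^{2j-1}|\hat\psi_{h_e}(\beta)|^2\,d\beta.$$
Changing variables $\beta\mapsto\beta^{1/2}$ (equivalently recognizing $\beta^{2j-1}d\beta\simeq (\beta')^{j-1}d\beta'$ with $\beta'=\beta^2$, and $\hat\psi_{h_e}(\beta)$ related to $\tilde h_e(i\beta^2)=\hat h_e(\text{something})$ as in the proof of Lemma~\ref{phivsh}), the right-hand side is comparable to $\|h_e\|_{H^{\frac{2s-1}{4}}}^2$ when $j$ matches the scaling; combining the endpoint integer cases by interpolation yields the bound with $\|\phi_{h_e}\|_{H^s}$ or $\|\psi_{h_e}\|_{H^s}$-type quantities, which by Lemma~\ref{phivsh} and Lemma~\ref{Extension} are $\lesssim (1+T)\|h\|_{H^{\frac{2s-1}{4}}(0,T)}$.

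**A cleaner route**, which I would actually pursue, is to bypass the Hardy inequality entirely and observe that $\|W_{b,2}(t)h_e\|_{H^s(\mathbb{R}_+)}\le \|W_{b,2}(t)h_e\|_{H^s(\mathbb{R})}$ under the even-extension $e^{-\beta|x|}$ only when $s<\tfrac32$; for $s>\tfrac32$ one instead estimates on $\mathbb{R}_+$ using the explicit Fourier transform in $x$ of $e^{-\beta x}\mathbf{1}_{x>0}$, namely $(\beta+i\xi)^{-1}$, so that (by Plancherel in $x$ and Minkowski in $\beta$)
$$\|W_{b,2}(t)h_e\|_{H^s(\mathbb{R}_+)}\le \left\|\int_0^\infty \frac{(1+\xi^2)^{s/2}}{\beta+i\xi}\,|\hat\psi_{h_e}(\beta)|\,d\beta\right\|_{L^2_\xi},$$
then split the $\beta$-integral at $\beta\sim|\xi|$ and estimate each piece. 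In the region $\beta\gtrsim|\xi|$ one uses $|\beta+i\xi|\gtrsim\beta$ and $(1+\xi^2)^{s/2}\lesssim\beta^s$; in the region $\beta\lesssim|\xi|$ one uses $|\beta+i\xi|\gtrsim|\xi|$, gaining decay. Carrying out the Cauchy–Schwarz/Schur test in each region reduces matters to $\int_0^\infty\beta^{2s-1}|\hat\psi_{h_e}(\beta)|^2 d\beta$, i.e.\ again $\|\psi_{h_e}\|_{H^s}^2$ after the parabolic change of variables, and we finish via Lemma~\ref{phivsh} and Lemma~\ref{Extension}, with the constant $C(1+T)$ coming entirely from Lemma~\ref{Extension}.

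**The main obstacle** I expect is the case $s>\tfrac32$: there the natural even extension of $W_{b,2}(t)h_e$ across $x=0$ is only Lipschitz, not $C^1$, so one genuinely cannot invoke the whole-line theory, and the $H^s(\mathbb{R}_+)$ norm must be extracted from the $\beta$-integral by hand. The delicate point is getting the powers of $\beta$ to line up correctly under the substitution $\beta'=\beta^2$ so that the weight $(\beta')^{\frac{2s-1}{4}\cdot 2}$ on the Sobolev side of $h_e$ emerges with exactly the claimed exponent, and checking that no logarithmic losses occur at the interface $\beta\sim|\xi|$ in the Schur-test split. Once the algebra of exponents is pinned down — using $\frac{2s-1}{4}$ is precisely the exponent for which $e^{-\beta x}$-superpositions sit in $H^s(\mathbb{R}_+)$ — the estimate is routine, and the $T$-dependence is isolated cleanly in the extension lemma, giving the stated $C(1+T)$ with $C$ independent of $T$.
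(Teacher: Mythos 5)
Your overall strategy --- estimate $W_{b,2}(t)h_e$ directly from the $\beta$-integral, reduce to a weighted $L^2_\beta$ quantity, and close with Lemmas \ref{phivsh} and \ref{Extension} --- is sound, and you correctly identify that the whole-line theory is unavailable for $s>\frac32$. But both concrete routes you write down contain errors in the one step that actually carries the estimate. In the first route, the claimed Laplace-transform bound $\int_0^\infty\bigl|\int_0^\infty g(\beta)e^{-\beta x}\,d\beta\bigr|^2dx\lesssim\int_0^\infty|g(\beta)|^2\beta^{-1}\,d\beta$ is false: expanding the square gives the Hilbert form $\iint\frac{g(\beta)\overline{g(\beta')}}{\beta+\beta'}\,d\beta\,d\beta'$, whose sharp bound is $\pi\|g\|_{L^2}^2$ with \emph{no} weight (your version fails the scaling test, and $g=\mathbf 1_{[N,2N]}$ with $N\to\infty$ is an explicit counterexample). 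The spurious $\beta^{-1}$ then propagates: $\int_0^\infty\beta^{2s-1}|\hat\psi_{h_e}|^2\,d\beta$ is not $\|\psi_{h_e}\|_{H^s}^2$ but essentially $\|\psi_{h_e}\|_{H^{s-1/2}}^2$, i.e.\ $\|h_e\|_{\dot H^{(2s-2)/4}}^2$ after the parabolic substitution --- half a derivative better than the true statement. (That $\tfrac{1}{2\beta}$ gain belongs to the \emph{time}-trace estimate \eqref{dtEst}, where it arises from Plancherel in $t$ after substituting $\tau=\beta^2$; it has no business in the space-trace estimate.) The second route is worse off: since $e^{-\beta x}\mathbf 1_{x>0}$ does not belong to $H^s(\mathbb R)$ for $s\ge\frac12$, the quantity $\bigl\|\int_0^\infty(1+\xi^2)^{s/2}|\beta+i\xi|^{-1}|\hat\psi_{h_e}(\beta)|\,d\beta\bigr\|_{L^2_\xi}$ is $+\infty$ whenever $\hat\psi_{h_e}\not\equiv0$ and $s\ge\frac12$; putting absolute values inside destroys exactly the structure (the restriction to $x>0$ and the resulting kernel $(\beta+\beta')^{-1}$) that makes the half-line bound true, and no Schur-test split can rescue that display.

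The good news is that the correct Hilbert inequality repairs route 1 completely: $\|\partial_x^jW_{b,2}(t)h_e\|_{L^2(\mathbb R_+)}^2\le\pi\int_0^\infty\beta^{2j}|\hat\psi_{h_e}(\beta)|^2\,d\beta\le\pi\|\psi_{h_e}\|_{H^j}^2$, interpolation between integer levels gives $\|W_{b,2}(t)h_e\|_{H^s(\mathbb R_+)}\lesssim\|\psi_{h_e}\|_{H^s}$ uniformly in $t$, and the exponents now line up ($\int\beta^{2s}|\hat\psi_{h_e}|^2\,d\beta\simeq\|h_e\|_{\dot H^{(2s-1)/4}}^2$), so Lemmas \ref{phivsh} and \ref{Extension} finish with the stated factor $C(1+T)$. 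This would be a legitimate, more elementary alternative to the paper's argument, which instead writes $W_{b,2}(t)h_e=\mathcal K(t)\psi_{h_e}$ with kernel $K_t(x,y)=\int_0^\infty e^{i\beta^2t-\beta|x|-iy\beta}\,d\beta$, imports the Strichartz-type bound $\|\mathcal K(t)\psi\|_{L^q(0,T;L^r)}\lesssim\|\psi\|_{L^2}$ from \cite[Proposition 3.8]{BonaSunZhang2015}, differentiates, interpolates, and takes $q=\infty$, $r=2$. As submitted, however, your proof does not close.
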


\begin{proof} We can rewrite $[W_{b,2}(t)h](x)$ as $$[W_{b,2}(t)h_e](x):=\int_{-\infty}^\infty K_t(x,y){\psi}_{h_e}(y)dy=:\mathcal{K}(t)\psi_{h_e}$$ where $K_t(x,y)=\int_0^\infty\exp(i\beta^2 t-\beta |x|-iy\beta)d\beta.$  It is proven in \cite[Proposition 3.8]{BonaSunZhang2015} that  $$\|\mathcal{K}(t)\psi_{h_e}\|_{L^q(0,T;L^r)}\lesssim \|\psi_{h_e}\|_{L^2}$$ for an admissible $(q,r)$.  Similarly, taking one derivative in $x$ variable, one gets
$$\|\partial_x[\mathcal{K}(t)\psi_{h_e}]\|_{L^q(0,T;L^r)}\lesssim \|\partial_x[\psi_{h_e}]\|_{L^2}.$$  Now, one can interpolate and use the proof of Lemma \ref{phivsh} to obtain \begin{equation}\label{Wb2Est1}\|W_{b,2}(\cdot)h_{e}\|_{L^q(0,T;W^{s,r})}\lesssim \|\mathcal{H}\|_{H^{\frac{2s+3}{4}}}\end{equation} for $s\in \left[\frac{1}{2},1\right]$. For larger $s$, one can differentiate and interpolate again.  Finally, \eqref{Wb2Est2} follows by taking $r=2,q=\infty$ in \eqref{Wb2Est1}.  Now, \eqref{Wb2Est2} follows from \eqref{Wb2Est1} and Lemma \ref{Extension}.
\end{proof}

\begin{lem}[Time traces] Let $s\ge \frac{1}{2}$ and $T>0$.  Then, there exists $C>0$ (independent of $T$) such that \begin{equation}\label{Wb2EstTime1} \sup_{x\in\mathbb{R}_+}\|W_{b,2}(\cdot)h_{e}\|_{H^{\frac{2s+1}{4}}(0,T)}\le C(1+T)\|h\|_{H^{\frac{2s-1}{4}}(0,T)}\end{equation}
for any $h\in H^{\frac{2s-1}{4}}(0,T)$ with $h(0)=0$ if $s>\frac{3}{2}$.
\end{lem}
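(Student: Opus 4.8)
The plan is to reduce the bound on the $H^{\frac{2s+1}{4}}(0,T)$ norm in time to a one–dimensional weighted $L^2$ estimate on the Fourier (in $t$) side, exactly in the spirit of the proof of Lemma \ref{phivsh}, and to read off the uniformity in $x\in\mathbb{R}_+$ from the decaying exponential $e^{-\beta x}\le 1$. First I would record that for $x\ge 0$, where $|x|=x$, the definition of $W_{b,2}$ gives
$$[W_{b,2}(t)h_e](x)=-\frac1\pi\int_0^\infty e^{i\beta^2t-\beta x}\,\tilde h_e(i\beta^2)\,d\beta,$$
and after the change of variables $\mu=\beta^2$ this is, up to a multiplicative constant, the inverse Fourier transform in $t$ of $m_x(\mu):=\mathbf{1}_{\{\mu>0\}}\,\dfrac{e^{-\sqrt\mu\,x}}{\sqrt\mu}\,\tilde h_e(i\mu)$. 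Therefore, using that restriction does not increase the Sobolev norm, $\|g\|_{H^\sigma(0,T)}\le\|g\|_{H^\sigma(\mathbb{R})}$, together with the Fourier characterization of $H^\sigma(\mathbb{R})$ with $\sigma=\frac{2s+1}{4}$, I obtain
$$\sup_{x\in\mathbb{R}_+}\big\|W_{b,2}(\cdot)h_e(x)\big\|_{H^{\frac{2s+1}{4}}(0,T)}^2\ \lesssim\ \sup_{x\ge 0}\int_0^\infty(1+\mu)^{\frac{2s+1}{2}}\,\frac{e^{-2\sqrt\mu\,x}}{\mu}\,|\tilde h_e(i\mu)|^2\,d\mu .$$

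Next I would discard the factor $e^{-2\sqrt\mu\,x}\le 1$ — this is exactly the point where the supremum over $x\in\mathbb{R}_+$ costs nothing, in contrast with the space–trace estimates — and then substitute the two identities already used in Lemma \ref{phivsh}, namely $\tilde h_e(i\mu)=\hat h_e(\pm\mu)$ and $\hat h_e(\xi)=i\xi\,\hat{\mathcal{H}}(\xi)$. This turns the integral into $\int_0^\infty(1+\mu)^{\frac{2s+1}{2}}\,\mu\,|\hat{\mathcal{H}}(\pm\mu)|^2\,d\mu$, and since $(1+\mu)^{\frac{2s+1}{2}}\mu\lesssim(1+\mu^2)^{\frac{2s+3}{4}}$ for all $\mu>0$ (near $\mu=0$ the extra factor $\mu$ only helps, while for large $\mu$ one has $\tfrac{2s+1}{2}+1=\tfrac{2s+3}{2}=2\cdot\tfrac{2s+3}{4}$), the integral is bounded by $\|\mathcal{H}\|_{H^{\frac{2s+3}{4}}}^2$. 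Invoking Lemma \ref{Extension}, which supplies $\|\mathcal{H}\|_{H^{\frac{2s+3}{4}}}\le C(1+T)\|h\|_{H^{\frac{2s-1}{4}}(0,T)}$ with $C$ independent of $T$, finishes the proof of \eqref{Wb2EstTime1}. For larger $s$ the same computation applies verbatim, the only change being the value of the exponent $\sigma$.

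The only subtle point worth isolating is the integrability of $m_x$ near $\mu=0$, where the weight $\mu^{-1}$ appears: this is precisely controlled because $h_e$ was constructed in Lemma \ref{Extension} to have zero mean, so $\mathcal{H}$ is compactly supported and $\hat h_e(\mu)=i\mu\hat{\mathcal{H}}(\mu)$ vanishes linearly at the origin, making $|\tilde h_e(i\mu)|^2/\mu=O(\mu)$ as $\mu\to 0^{+}$; in the range $s>\tfrac32$ it is the compatibility assumption $h(0)=0$ that guarantees such a zero–mean extension exists, and this is also what keeps $W_{b,2}(\cdot)h_e(x)$ in $H^{\frac{2s+1}{4}}$ of the whole line in the first place (so that the restriction argument is legitimate). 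Apart from this low–frequency check, the argument is the same weighted–$L^2$ bookkeeping as in Lemma \ref{phivsh}, with all of the $x$–dependence confined to the harmless factor $e^{-2\sqrt\mu\,x}$; I do not expect any genuine obstacle.
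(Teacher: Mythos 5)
Your argument is correct and is essentially the paper's own proof: both reduce the time-trace bound to a weighted $L^2$ estimate on the Fourier/Laplace side in $t$ after the substitution $\mu=\beta^2$, discard the factor $e^{-\beta x}\le 1$ to get uniformity in $x$, convert $\tilde h_e$ into $\hat{\mathcal H}$ via $\hat h_e(\xi)=i\xi\hat{\mathcal H}(\xi)$, and close with Lemma \ref{Extension}. The only cosmetic difference is that the paper establishes the bound for integer numbers of time derivatives and then interpolates (with a density argument), whereas you handle the fractional weight $(1+\mu^2)^{\frac{2s+1}{4}}$ directly, which is a harmless simplification.
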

\begin{proof} This result is an application of Theorem 4.1 in \cite{KenigPonceVega1991}.  For $k\ge 0$ (integer)
\begin{multline}\label{dtEst}\|\partial_t^kW_{b,2}(\cdot){h_e}\|_{L^2_t}^2
=\int_{\mathbb{R}_+}\beta^{4k}\frac{|\hat{\psi}_{h_e}(\beta)|^2}{2\beta}d\beta\\\lesssim \int_\mathbb{R_+}(1+\beta^2)^{k+\frac{1}{2}}|\hat{\mathcal{H}}(\beta)|^2d\beta\lesssim\|\mathcal{H}\|_{H^{{k+\frac{1}{2}}}}. \end{multline}
Upon interpolation, the result follows in the case that $h$, $h_e$, and $\mathcal{H}$ are smooth, then a density argument finishes the proof.  Now, \eqref{Wb2EstTime1} follows from \eqref{dtEst} and Lemma \ref{Extension}.
\end{proof}

\subsection{Representation Formula}
We take an extension of $u_0$ to $\mathbb{R}$, say $u^*_0\in H^s$ such that $\|u_0^*\|_{H^s}\lesssim \|u_0\|_{H^s(\mathbb{R}^+)}$.  Therefore, $u=W_\mathbb{R}(t)u^*_0$ solves the problem
$$i\partial_tu + \partial_x^2u=0,u(0,t)=u^*_0(x), x,t\in \mathbb{R}$$  where $W_\mathbb{R}(t)$ is the evolution operator for the linear Schrödinger equation.  Similarly, if $f^*$ is an extension of $f$, then the solution of the non-homogeneous Cauchy problem
$$iu_t+u_{xx}=f^*(x,t), u(x,0)=0, x,t\in \mathbb{R}$$ can be written as $$u(x,t)=-i\int_0^tW_\mathbb{R}(t-\tau)f^*(\tau)d\tau.$$

Therefore, if we define \begin{equation}\label{RepForm}u_e(x,t)=W_\mathbb{R}(t)u^*_0-i\int_0^tW_\mathbb{R}(t-\tau)f^*(\tau)d\tau+W_b([h-g-p]_e(t))\end{equation} with $$g(t)=\partial_x W_\mathbb{R}(t)u^*_0|_{x=0}$$ and $$p(t)=-i\partial_x\int_0^tW_\mathbb{R}(t-\tau)f^*(\tau)d\tau|_{x=0},$$ then $u={u_e}|_{[0,T)}$ will solve
\begin{equation}\label{LinNonHom2} \left\{ \begin{array}{ll}
         i\partial_t u + \partial_x^2u= f, t\in (0,T)$, $x\in \mathbb{R}_+,\\
         u(x,0)=u_0, \partial_xu(0,t)= h(t).\end{array} \right.
\end{equation}

In the formula we have given, $g(t)$ and $p(t)$ make sense only if $s>3/2$.  In other cases, we take those boundary traces equal to zero in the representation formula \eqref{RepForm}.

\subsection{Space-time estimates on $\mathbb{R}$}\label{Rtheory}

We will utilize the following space and time estimates on $\mathbb{R}$ for the evolution operator of the linear Schrödinger equation \cite{Cazenave2003}.  Note that these estimates can be directly applied to the first part $W_{b,1}$ of the boundary evolution operator.

\begin{lem}\label{RTheory1} Let $s\in \mathbb{R}$ , $T>0$, $\phi\in H^s$, and $u:=W_{\mathbb{R}}\phi$.  Then, there exists $C=C(s)$ such that
\begin{equation}\label{Wb2Est1onR}\sup_{t\in[0,T]}\|u(\cdot,t)\|_{H^s}+\sup_{x\in\mathbb{R}}\|u(x,\cdot )\|_{H^{\frac{2s+1}{4}}(0,T)}\le C \|\phi\|_{H^s}.\end{equation}
%GEREKSİZ \begin{equation}\label{Wb2Est2onR} \|u\|_{L^q(0,T;W^{s,r})}\le C\|\phi\|_{H^s},\end{equation} and
%\begin{equation}\label{Wb2Est3onR} \sup_{x\in\mathbb{R}}\|u(x,\cdot )\|_{H^{\frac{2s+1}{4}}(0,T)}\le C\|\phi\|_{H^s}.\end{equation}
\end{lem}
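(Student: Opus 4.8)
The strategy is to reduce both estimates to standard facts about the free Schrödinger group on $\mathbb{R}$, exploiting the fact that $W_{\mathbb{R}}(t)$ is a Fourier multiplier. First I would treat the space-trace term. Writing $u(x,t)=\frac{1}{2\pi}\int_{-\infty}^\infty e^{ix\xi-it\xi^2}\hat\phi(\xi)\,d\xi$, one sees immediately that $\widehat{u(\cdot,t)}(\xi)=e^{-it\xi^2}\hat\phi(\xi)$, so $|\widehat{u(\cdot,t)}(\xi)|=|\hat\phi(\xi)|$ pointwise in $\xi$. Hence $\|u(\cdot,t)\|_{H^s}=\|\phi\|_{H^s}$ for every $t$, and in particular $\sup_{t\in[0,T]}\|u(\cdot,t)\|_{H^s}=\|\phi\|_{H^s}$. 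This part requires no restriction on $s$ and contributes a constant $C=1$.

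The time-trace term is the substantive part. Fix $x\in\mathbb{R}$ and consider $v_x(t):=u(x,t)=\frac{1}{2\pi}\int e^{ix\xi}e^{-it\xi^2}\hat\phi(\xi)\,d\xi$. The goal is to bound $\|v_x\|_{H^{(2s+1)/4}(0,T)}$ uniformly in $x$. I would first pass to an estimate on the whole line in $t$ after a time cutoff: choose $\theta\in C_c^\infty(\mathbb{R})$ with $\theta\equiv 1$ on $[0,T]$ and supported in $(-1,T+1)$, so that $\|v_x\|_{H^{(2s+1)/4}(0,T)}\le \|\theta\,v_x\|_{H^{(2s+1)/4}(\mathbb{R})}$, with the implicit constant depending on $T$ through $\theta$ — or, to keep $C$ independent of $T$ as claimed, one instead invokes directly the sharp Kato smoothing / maximal-in-$x$ trace estimate. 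Concretely this is the content of the Kenig–Ponce–Vega estimate (Theorem 4.1 in \cite{KenigPonceVega1991}, also in \cite{Cazenave2003}): the operator $\phi\mapsto W_{\mathbb{R}}(\cdot)\phi$ maps $H^s(\mathbb{R})$ boundedly into $L^\infty_x H^{(2s+1)/4}_t(\mathbb{R}\times\mathbb{R})$. To see why the index $(2s+1)/4$ is correct, perform the change of variables $\xi\mapsto \eta=\xi^2$ on the half-line $\xi>0$ (and symmetrically on $\xi<0$): then $v_x(t)$ becomes, up to the harmless factor $e^{ix\xi}$, an inverse Fourier transform in $t$ of $\hat\phi(\sqrt\eta)/\sqrt\eta$, and computing $\|v_x\|_{\dot H^{a}_t}^2 = \int |\eta|^{2a}\,|\hat\phi(\pm\sqrt\eta)|^2\,\frac{d\eta}{4|\eta|}$; substituting back $\eta=\xi^2$ gives $\int |\xi|^{4a-1}|\hat\phi(\xi)|^2\,d\xi$, which matches $\|\phi\|_{\dot H^s}^2$ exactly when $4a-1=2s$, i.e. $a=(2s+1)/4$. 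The factor $e^{ix\xi}$ has modulus one and so drops out, which is precisely what yields the supremum over $x\in\mathbb{R}$; the low-frequency/inhomogeneous part is handled by the same computation with $(1+\xi^2)^{s}$ in place of $|\xi|^{2s}$, together with the local-in-time cutoff to absorb the region $|\xi|\lesssim 1$.

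Combining the two bounds gives \eqref{Wb2Est1onR}. The main obstacle — and the only place real care is needed — is the time-trace estimate: one must handle the degeneracy of the change of variables $\eta=\xi^2$ near $\xi=0$ (the Jacobian $d\eta/d\xi=2\xi$ vanishes there), which is why the homogeneous count above must be supplemented by a separate, elementary treatment of low frequencies; away from $\xi=0$ the argument is a clean Plancherel computation. Everything else is a direct consequence of $W_{\mathbb{R}}(t)$ being a unitary Fourier multiplier, and the whole lemma can legitimately be cited from \cite{KenigPonceVega1991} and \cite{Cazenave2003}, so in the write-up I would state the two pieces, give the one-line Plancherel argument for the space traces, and refer to those sources for the sharp time-trace inequality.
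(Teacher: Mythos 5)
Your proposal is correct and coincides with the paper's treatment: the paper states Lemma \ref{RTheory1} without proof, citing \cite{Cazenave2003} (and \cite{KenigPonceVega1991} for the sharp local smoothing exponent $\frac{2s+1}{4}$), and your sketch is precisely the standard derivation behind those citations — unitarity of $W_{\mathbb{R}}(t)$ on $H^s$ for the space traces, and the Kenig--Ponce--Vega change-of-variables/Plancherel computation for the time traces. The only point worth flagging in a write-up is that the low-frequency ($|\xi|\lesssim 1$) contribution to the $L^2(0,T)$ part of the time-trace norm naturally produces a factor like $(1+T)^{1/2}$ rather than a constant depending on $s$ alone, which is harmless for the paper's applications (all downstream estimates already carry $(1+T)$ factors).
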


\begin{lem}\label{VarPar} Let $T>0$, $f\in L^{1}(0,T;H^{s})$, and $u:=\int_0^tW_{\mathbb{R}}(t-\tau)f(\tau)d\tau.$  Then, for any $s\in\mathbb{R}$, there exists a constant $C=C(s)>0$ such that \begin{equation}\label{Wb2Est1onRf}\sup_{t\in[0,T]}\|u(\cdot,t)\|_{H^s}+\sup_{x\in\mathbb{R}}\|u(x,\cdot)\|_{H^{\frac{2s+1}{4}}(0,T)}\le C \|f\|_{L^{1}(0,T;H^{s})}.\end{equation}
 % \item[(ii)] for any $s\in\mathbb{R}$, there exists a constant $C>0$ such that \begin{equation}\label{Wb2Est3onRf} \sup_{x\in\mathbb{R}}\|u(x,\cdot)\|_{H^{\frac{2s+1}{4}}(0,T)}\le C\|f\|_{L^{1}(0,T;H^s)}.\end{equation}
 % \item[(iii)GEREKSİZ] for any $s\in (-\frac{3}{2},\frac{1}{2})$, there exists a constant $C=C(s)$ such that \begin{equation}\label{Wb2Est2onRf} \sup_{x\in\mathbb{R}}\|u(x,\cdot )\|_{H^{\frac{2s+1}{4}}(0,T)}\le C(1+T)^\frac{1}{2} \|f\|_{L^{q'}(0,T;W^{s,r'})}.\end{equation}

\end{lem}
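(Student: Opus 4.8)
The plan is to reduce everything to the homogeneous estimate in Lemma \ref{RTheory1} via Minkowski's integral inequality, exploiting the fact that the relevant norms on the right-hand side of \eqref{Wb2Est1onRf} are built from $\sup$'s and $L^2$-in-one-variable quantities, both of which commute well with integration against $d\tau$. First I would write, for the spatial-trace part,
\[
\sup_{t\in[0,T]}\|u(\cdot,t)\|_{H^s}
=\sup_{t\in[0,T]}\Bigl\| \int_0^t W_{\mathbb{R}}(t-\tau)f(\tau)\,d\tau \Bigr\|_{H^s}
\le \sup_{t\in[0,T]}\int_0^t \|W_{\mathbb{R}}(t-\tau)f(\tau)\|_{H^s}\,d\tau .
\]
Since $W_{\mathbb{R}}(t-\tau)$ is unitary on $H^s$, the integrand equals $\|f(\tau)\|_{H^s}$, and the right side is bounded by $\int_0^T\|f(\tau)\|_{H^s}\,d\tau=\|f\|_{L^1(0,T;H^s)}$. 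That disposes of the first term with constant $C=1$.

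The second term is the genuine point of the lemma. Here I would freeze $x\in\mathbb{R}$ and view $\tau\mapsto W_{\mathbb{R}}(\cdot-\tau)f(\tau)$ as a Bochner-integrable map into the Banach space $Y:=H^{\frac{2s+1}{4}}(0,T)$ (in the $t$-variable, after evaluating at the fixed $x$). Concretely, set $v(x,t;\tau):=\bigl(W_{\mathbb{R}}(t-\tau)f(\tau)\bigr)(x)$ for $t>\tau$ and $0$ otherwise, so that $u(x,t)=\int_0^T v(x,t;\tau)\,d\tau$. Applying Minkowski's inequality for the $Y$-norm in the $t$-variable,
\[
\|u(x,\cdot)\|_{H^{\frac{2s+1}{4}}(0,T)}
\le \int_0^T \bigl\| v(x,\cdot;\tau)\bigr\|_{H^{\frac{2s+1}{4}}(0,T)}\,d\tau .
\]
For each fixed $\tau$, $t\mapsto v(x,t;\tau)$ is (the $x$-trace of) the free evolution of the datum $f(\tau)\in H^s$ started at time $\tau$; after a translation $t\mapsto t-\tau$ in time (which only enlarges the time interval to at most $(0,T)$ again, or a subinterval thereof), Lemma \ref{RTheory1} gives $\|v(x,\cdot;\tau)\|_{H^{\frac{2s+1}{4}}(0,T)}\le C\|f(\tau)\|_{H^s}$ with $C$ independent of $x$ and $\tau$. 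Taking the supremum over $x$ and integrating in $\tau$ yields $\sup_{x}\|u(x,\cdot)\|_{H^{\frac{2s+1}{4}}(0,T)}\le C\|f\|_{L^1(0,T;H^s)}$. Adding the two bounds gives \eqref{Wb2Est1onRf}.

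The main obstacle — and the only step requiring care — is justifying the Minkowski interchange for the time-trace seminorm $\|\cdot\|_{H^{\frac{2s+1}{4}}(0,T)}$, since this is a norm on functions of $t$ while the integration variable $\tau$ is also a time variable and the support condition $t>\tau$ couples them; one must check measurability/integrability of $\tau\mapsto v(x,\cdot;\tau)\in Y$ and handle the moving lower limit of integration. The clean way around this is to first prove the estimate for smooth, compactly supported $f$ (where all manipulations are legitimate and $t\mapsto v$ is continuous into $Y$), obtain the inequality with a $T$-independent constant, and then extend to general $f\in L^1(0,T;H^s)$ by density, exactly as in the proofs of the $W_{b,2}$ estimates above. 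The other subtlety, that the time-translate of the free solution must still be controlled on $(0,T)$, is harmless because restricting a function in $H^{\frac{2s+1}{4}}$ to a subinterval does not increase its norm, and because Lemma \ref{RTheory1} already provides a constant uniform in the length of the time interval on $[0,T]$.
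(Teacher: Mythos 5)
The paper offers no proof of Lemma \ref{VarPar} (it is quoted from the standard linear theory, with a pointer to \cite{Cazenave2003}), so the only thing to assess is whether your argument stands on its own. The first half does: unitarity of $W_{\mathbb{R}}$ on $H^s$ plus Minkowski gives $\sup_t\|u(\cdot,t)\|_{H^s}\le\|f\|_{L^1(0,T;H^s)}$. The second half, which you rightly single out as the genuine content, breaks down at exactly the step you flagged. The slice $v(x,\cdot;\tau)=\chi_{\{t>\tau\}}\,\bigl(W_{\mathbb{R}}(t-\tau)f(\tau)\bigr)(x)$ is \emph{not} a time-translate of a free evolution restricted to a subinterval of $(0,T)$: it is the free evolution multiplied by the sharp cutoff $\chi_{\{t>\tau\}}$, and its value at $t=\tau^{+}$ is $f(\tau)(x)$. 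For $\tau\in(0,T)$ and $f(\tau)(x)\neq0$ it therefore has a jump in the interior of $(0,T)$, and a function with an interior jump does not belong to $H^{\sigma}(0,T)$ for $\sigma\ge\tfrac12$. Since $\sigma=\tfrac{2s+1}{4}>\tfrac12$ throughout the paper's range $s>\tfrac12$, each slice has \emph{infinite} $H^{\frac{2s+1}{4}}(0,T)$ norm, Minkowski yields only $\|u(x,\cdot)\|\le\int_0^T\infty\,d\tau$, and Lemma \ref{RTheory1} cannot be applied slice by slice. (Equivalently: restriction to $(\tau,T)$ does not increase the norm, but what you actually need is the norm of the extension by zero to $(0,\tau)$, and extension by zero is unbounded on $H^\sigma$ for $\sigma\ge\tfrac12$ when the trace at $t=\tau$ does not vanish.) The proposed repair by density does not help: the decomposition into sharply truncated pieces already gives a trivial bound for smooth, compactly supported $f$, so there is no inequality to extend.

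This is not a removable technicality but the actual crux of the retarded estimate. The non-retarded integral $\int_0^T W_{\mathbb{R}}(t-\tau)f(\tau)\,d\tau=W_{\mathbb{R}}(t)\Phi$ with $\Phi=\int_0^T W_{\mathbb{R}}(-\tau)f(\tau)\,d\tau$ does follow from Lemma \ref{RTheory1} and Minkowski; all the difficulty sits in the difference term $\tfrac12\int_0^T\operatorname{sgn}(t-\tau)W_{\mathbb{R}}(t-\tau)f(\tau)\,d\tau$, and no argument that estimates the $\tau$-slices separately in $H^{\frac{2s+1}{4}}_t$ can capture the cancellation needed there. Worse, a time-concentration test makes the obstruction quantitative: take $f(x,\tau)=g(x)\phi_\epsilon(\tau-\tau_0)$ with $\phi_\epsilon$ an approximate identity at some $\tau_0\in(0,T)$; then $\|f\|_{L^1(0,T;H^s)}=\|g\|_{H^s}$ is fixed while $u(x,\cdot)$ converges to a function with a jump of height $g(x)$ at $\tau_0$, and a Gagliardo-seminorm computation gives $\|u(x,\cdot)\|_{H^{\sigma}(0,T)}\gtrsim|g(x)|\,\epsilon^{\frac12-\sigma}\to\infty$ for $\sigma>\tfrac12$. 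So the time-trace part of \eqref{Wb2Est1onRf} cannot be obtained from the $L^1(0,T;H^s)$ norm alone by any argument of this type; the known proofs of local-smoothing estimates for the Duhamel term (Kenig--Ponce--Vega, Holmer, Bona--Sun--Zhang) work with dual-Strichartz or Bourgain-type norms on the right, or invoke a Christ--Kenig-type lemma to pass from the non-retarded to the retarded version, precisely because the slice-by-slice Minkowski route is unavailable here.
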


\subsection{Regularity}  Combining Lemmas \ref{Wb1Lem}-\ref{VarPar}, we have the following regularity theorems for the linear model.
\begin{thm}\label{RegThm1}Let $T>0$, and $s\ge 1/2$.  Then, there exists $C>0$ (independent of T) such that for any $h\in H^{\frac{2s-1}{4}}(0,T)$ with $h(0)=0$ if $s>\frac{3}{2}$,  $u=W_b(t)h$ satisfies
\begin{multline}\sup_{t\in[0,T]}\|u(\cdot, t)\|_{H^s(\mathbb{R}_+)}+\sup_{x\in\mathbb{R}_+}\|u(x,\cdot)\|_{H^{\frac{2s+1}{4}}(0,T)}\\
\le C(1+T)\|h\|_{H^{\frac{2s-1}{4}}(0,T)}.\end{multline}
\end{thm}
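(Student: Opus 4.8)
The plan is to exploit the decomposition $W_b(t)h = W_{b,1}(t)h_e + W_{b,2}(t)h_e$, where $h_e$ is the time–extension of $h$ furnished by Lemma \ref{Extension}, estimate each of the two pieces separately using the linear machinery already assembled above, and then recombine via the triangle inequality (noting that restricting the spatial variable from $\mathbb{R}$ to $\mathbb{R}_+$ is norm–nonincreasing, both for the supremum in $x$ and for the $H^s(\mathbb{R}_+)$ norm, which is itself defined by extension).

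First I would treat the ``interior'' piece $W_{b,1}(t)h_e$. Since this operator extends to all of $x\in\mathbb{R}$ without any change in its definition, Lemma \ref{Wb1Lem} identifies it with $W_{\mathbb{R}}(t)\phi_{h_e}$, the free Schrödinger evolution of the datum $\phi_{h_e}$. By Lemma \ref{phivsh} we have $\phi_{h_e}\in H^s$, so the full–line space–time estimate of Lemma \ref{RTheory1} applies and gives
$$\sup_{t\in[0,T]}\|W_{b,1}(t)h_e\|_{H^s(\mathbb{R})}+\sup_{x\in\mathbb{R}}\|W_{b,1}(\cdot)h_e\|_{H^{\frac{2s+1}{4}}(0,T)}\le C(s)\|\phi_{h_e}\|_{H^s},$$
with $C(s)$ independent of $T$. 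Tracing through the proof of Lemma \ref{phivsh} bounds $\|\phi_{h_e}\|_{H^s}\lesssim \|\mathcal{H}\|_{H^{\frac{2s+3}{4}}}$, and then Lemma \ref{Extension} gives $\|\mathcal{H}\|_{H^{\frac{2s+3}{4}}}\le C(1+T)\|h\|_{H^{\frac{2s-1}{4}}(0,T)}$; restricting $x$ to $\mathbb{R}_+$ then yields the claimed bound for this piece.

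Next I would treat the ``boundary-layer'' piece $W_{b,2}(t)h_e$. Here the two required inequalities have already been proved: the space-trace bound \eqref{Wb2Est2} gives $\sup_{t\in[0,T]}\|W_{b,2}(\cdot)h_e\|_{H^s}\le C(1+T)\|h\|_{H^{\frac{2s-1}{4}}(0,T)}$, and the time-trace bound \eqref{Wb2EstTime1} gives $\sup_{x\in\mathbb{R}_+}\|W_{b,2}(\cdot)h_e\|_{H^{\frac{2s+1}{4}}(0,T)}\le C(1+T)\|h\|_{H^{\frac{2s-1}{4}}(0,T)}$, both with constants independent of $T$. Adding the estimates for the two pieces and using the triangle inequality over the decomposition $W_b(t)h = W_{b,1}(t)h_e + W_{b,2}(t)h_e$ produces exactly the stated inequality.

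At this level the theorem is essentially a bookkeeping corollary of the preceding lemmas; the substantive analytic work (the Kato–smoothing/Strichartz bounds for $\mathcal{K}(t)$ adapted from \cite{BonaSunZhang2015}, the time-trace estimate via \cite{KenigPonceVega1991}, and the $T$–uniform extension of Lemma \ref{Extension}) has already been carried out. The only points that genuinely need care — and the closest thing to an obstacle — are (i) applying the restriction $\mathbb{R}\rightsquigarrow\mathbb{R}_+$ in the correct direction, and (ii) verifying that every constant entering the argument (in Lemma \ref{RTheory1}, in the two trace lemmas, and in Lemma \ref{Extension}) is independent of $T$, so that the final estimate indeed has the advertised form $C(1+T)$ with $C$ absolute. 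I do not anticipate any further difficulty.
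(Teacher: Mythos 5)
Your proposal is correct and follows exactly the route the paper intends: the paper's "proof" of Theorem \ref{RegThm1} is just the sentence "Combining Lemmas \ref{Wb1Lem}--\ref{VarPar}...", i.e.\ split $W_b h = W_{b,1}h_e + W_{b,2}h_e$, handle $W_{b,1}h_e$ via Lemma \ref{Wb1Lem}, Lemma \ref{phivsh}, Lemma \ref{RTheory1} and Lemma \ref{Extension}, and handle $W_{b,2}h_e$ via the space- and time-trace estimates \eqref{Wb2Est2} and \eqref{Wb2EstTime1}. You have simply written out the bookkeeping the authors leave implicit, including the correct attention to $T$-independence of the constants.
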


\begin{thm}Let $T>0$, $s\in \left(\frac{1}{2},\frac{7}{2}\right)-\{3/2\}$, $h\in H^{\frac{2s-1}{4}}(0,T)$, $f\in L^1(0,T;H^s(\mathbb{R}_+))$, $u_0\in H^s(\mathbb{R}_+)$, and if $s\in\left(\frac{3}{2},\frac{7}{2}\right)$, we assume the zeroth order compatibility condition $u_0'(0)=h(0)$.  Then there exists $C>0$ (independent of $T$) such that the solution $u$ of \eqref{LinNonHom2} satisfies
\begin{multline}\sup_{t\in[0,T]}\|u(x,\cdot)\|_{H^s(\mathbb{R}_+)}+\sup_{x\in\mathbb{R}_+}\|u(x,\cdot)\|_{H^{\frac{2s+1}{4}}(0,T)}\\
\le C\left(\|u_0\|_{H^s(\mathbb{R}_+)}+(1+T)\|h\|_{H^{\frac{2s-1}{4}}(0,T)}+\|f\|_{L^{1}(0,T;H^s(\mathbb{R}_+))}\right).\end{multline}
\end{thm}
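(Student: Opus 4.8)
The plan is to read the estimate off the representation formula \eqref{RepForm} by bounding its three constituents separately: the whole-line free evolution $W_{\mathbb{R}}(t)u_0^*$, the whole-line Duhamel term $-i\int_0^tW_{\mathbb{R}}(t-\tau)f^*(\tau)\,d\tau$, and the boundary correction $W_b([h-g-p]_e)$. First I would fix a Sobolev extension $u_0^*\in H^s$ of $u_0$ with $\|u_0^*\|_{H^s}\lesssim\|u_0\|_{H^s(\mathbb{R}_+)}$, and an extension $f^*\in L^1(0,T;H^s)$ of $f$ with $\|f^*\|_{L^1(0,T;H^s)}\lesssim\|f\|_{L^1(0,T;H^s(\mathbb{R}_+))}$, the latter obtained by applying a fixed spatial extension operator for a.e.\ $t$, so that its constant does not depend on $T$.

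The first two constituents are handled directly by the whole-line theory. Lemma \ref{RTheory1} with $\phi=u_0^*$ bounds $\sup_{t\in[0,T]}\|W_{\mathbb{R}}(t)u_0^*\|_{H^s}+\sup_{x\in\mathbb{R}}\|W_{\mathbb{R}}(\cdot)u_0^*\|_{H^{\frac{2s+1}{4}}(0,T)}$ by $C\|u_0^*\|_{H^s}$, and Lemma \ref{VarPar} with datum $f^*$ bounds the same quantity for the Duhamel term by $C\|f^*\|_{L^1(0,T;H^s)}$; restricting $x$ from $\mathbb{R}$ to $\mathbb{R}_+$ only shrinks the suprema, so these two pieces already account for the $\|u_0\|_{H^s(\mathbb{R}_+)}$ and $\|f\|_{L^1(0,T;H^s(\mathbb{R}_+))}$ contributions, with $T$-independent constants.

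For the boundary correction, Theorem \ref{RegThm1} bounds the left-hand-side norm of $W_b([h-g-p]_e)$ by $C(1+T)\|h-g-p\|_{H^{\frac{2s-1}{4}}(0,T)}$, so it remains to control the traces $g(t)=\partial_xW_{\mathbb{R}}(t)u_0^*|_{x=0}$ and $p(t)=-i\partial_x\int_0^tW_{\mathbb{R}}(t-\tau)f^*(\tau)\,d\tau|_{x=0}$ in $H^{\frac{2s-1}{4}}(0,T)$. The key observation is that $\partial_x$ commutes with $W_{\mathbb{R}}(t)$, so $g$ is the time trace at $x=0$ of the free evolution of $(u_0^*)'\in H^{s-1}$ and $p$ is, up to a constant, the time trace at $x=0$ of the Duhamel term with datum $\partial_xf^*\in L^1(0,T;H^{s-1})$. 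Since $\frac{2(s-1)+1}{4}=\frac{2s-1}{4}$, the time-trace parts of Lemma \ref{RTheory1} and Lemma \ref{VarPar} used at regularity index $s-1$ give $\|g\|_{H^{\frac{2s-1}{4}}(0,T)}\lesssim\|(u_0^*)'\|_{H^{s-1}}\lesssim\|u_0\|_{H^s(\mathbb{R}_+)}$ and $\|p\|_{H^{\frac{2s-1}{4}}(0,T)}\lesssim\|\partial_xf^*\|_{L^1(0,T;H^{s-1})}\lesssim\|f\|_{L^1(0,T;H^s(\mathbb{R}_+))}$, with $T$-independent constants. One must also check the compatibility hypothesis of Theorem \ref{RegThm1}: for $s>\frac{3}{2}$ one has $(h-g-p)(0)=h(0)-(u_0^*)'(0)-0=h(0)-u_0'(0)=0$ by the assumed zeroth order compatibility condition, while for $s<\frac{3}{2}$ one sets $g=p=0$ in \eqref{RepForm} as prescribed and reads the initial and boundary conditions in the weak sense of \cite[Definition 2.2]{BonaSunZhang2015}.

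Summing the three estimates yields the asserted inequality — at worst with the factor $1+T$ multiplying all three data norms, a form which can be sharpened to the stated one by regrouping $W_b([g]_e)$ and $W_b([p]_e)$ with the whole-line free evolution and the whole-line Duhamel term respectively, since each such group is precisely the half-line Schrödinger evolution with \emph{homogeneous} Neumann trace and is estimated with a $T$-independent constant. Finally one verifies that $u:=u_e|_{[0,T)}$ solves \eqref{LinNonHom2}: the operator $i\partial_t+\partial_x^2$ annihilates $W_{\mathbb{R}}(t)u_0^*$ and $W_b(\cdot)$ and converts the Duhamel term into $f$ on $\mathbb{R}_+$, at $t=0$ the last two pieces vanish and $u_e(x,0)=u_0^*(x)$, and the Neumann trace at $x=0$ equals $g(t)+p(t)+(h-g-p)(t)=h(t)$ on $(0,T)$ by the defining property \eqref{LinNonHom} of $W_b$. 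I expect the main obstacle to be the trace estimates for $g$ and $p$: everything hinges on the exact identity $\frac{2(s-1)+1}{4}=\frac{2s-1}{4}$, which makes the one-derivative loss incurred by differentiating $u_0^*$ (resp.\ $f^*$) be compensated exactly by the half-derivative smoothing of the time-trace estimate, landing precisely at the boundary regularity $\frac{2s-1}{4}$ required by $W_b$; tracking the endpoint and compatibility issues across the ranges $s<\frac{3}{2}$ and $s>\frac{3}{2}$ is the other point that needs care.
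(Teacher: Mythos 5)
Your argument is exactly the paper's: the theorem is obtained by combining the representation formula \eqref{RepForm} with Lemma \ref{RTheory1}, Lemma \ref{VarPar}, and Theorem \ref{RegThm1}, and your trace bounds for $g$ and $p$ (commuting $\partial_x$ with $W_{\mathbb{R}}$ and using $\tfrac{2(s-1)+1}{4}=\tfrac{2s-1}{4}$) are precisely the estimates \eqref{gEst}--\eqref{pEst} the paper records in Section \ref{LocExt}, including the verification that $h-g-p$ vanishes at $t=0$ when $s>\tfrac{3}{2}$. The proposal is correct and essentially identical in route to the paper's (largely implicit) proof.
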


\begin{rem}The optimal local smoothing estimate for the Schrödinger evolution operator is $\|W_{\mathbb{R}}u_0\|_{L_x^\infty \dot{H}_t^\frac{2s+1}{4}}\lesssim \|u_0\|_{\dot{H}^s}$; see for instance \cite{KenigPonceVega1991}.  This is why we consider the space $X_T^s$ defined in Section \ref{Intro} as our solution space.  It is shown in \cite{Holmer2005} and \cite{BonaSunZhang2015} that the natural space for the boundary data $h$ is $H_t^{\frac{2s+1}{4}}(0,T)$, when one considers Dirichlet boundary conditions.  Since one can formally think that one derivative in the space variable is equivalent to $1/2$ derivatives in the time variable, we are inclined to consider $H_t^{\frac{2s-1}{4}}(0,T)$ as the natural space for the boundary data  $h$ when we consider Neumann boundary conditions.\end{rem}

\section{Nonlinear Schrödinger equation}\label{SectionNonLin}
In this section, we study nonlinear Schrödinger equations with nonhomogeneous Neumann type boundary data.  More precisely, we consider the following model:
\begin{equation}\label{MainProb2} \left\{ \begin{array}{ll}
         i\partial_t u - \partial_x^2u+f(u)= 0, & \mbox{$x\in \mathbb{R}_+$, $t\in (0,T)$},\\
         u(x,0)=u_0,\\
         \partial_xu(0,t)=h,\end{array} \right.
         \end{equation} where $f(u)=k|u|^pu$,  $p>0$, $k\in \mathbb{R}-\{0\}$, $u_0\in H^s(\mathbb{R}_+)$, and ${s\in \left(\frac{1}{2},\frac{7}{2}\right)-\{\frac{3}{2}\}}.$

         Here, we consider two problems.  The first one is the open-loop well-posedness problem when $h$ is taken as a time dependent function in the Sobolev space $H^{\frac{2s-1}{4}}(0,T)$.  The second one is the closed-loop well-posedness problem when $h$ is taken as a function of $u(0,t)$ in the form $h(u(0,t))=-\lambda|u(0,t)|^ru(0,t)$ with $\lambda\in \mathbb{R}-\{0\}$.
%\subsection{Local well-posedness}\label{LocalWP}

\subsection{Local existence}\label{LocExt} In order to prove the local existence of solutions we will use the contraction mapping argument.  For the contraction mapping argument, we will use the following operator on a closed ball $\bar{B}_R(0)$ in the function space $X_{T_0}^s$ for appropriately chosen $R>0$ and $T_0\in (0,T]$.
\begin{multline}\label{Psi}[\Psi(u)](t):=W_\mathbb{R}(t)u^*_0-i\int_0^tW_\mathbb{R}(t-\tau)f(u^*(\tau))d\tau\\+W_b(t)([h-g-p(u^*)]_e)\end{multline} with $g(t)=\partial_x W_\mathbb{R}(t)u^*_0|_{x=0}$ and $[p(u^*)](t)=-i\partial_x\int_0^tW_\mathbb{R}(t-\tau)f(u^*(\tau))d\tau|_{x=0}.$  Here, $g(t)$ and $p(t)$ make sense only if $s>3/2$.  For $s\in \left(\frac{1}{2},\frac{3}{2}\right)$, we take these boundary traces equal to zero in \eqref{Psi}.

In order to use the Banach fixed point theorem, we have to show that $\Psi$ maps $\bar{B}_R(0)$ onto itself, and moreover that it is a contraction on the same set.
Therefore, we will estimate each term in \eqref{Psi} with respect to the norm defined in \eqref{Xtsnorm}.  By Lemma \ref{RTheory1},
\begin{equation}\|W_\mathbb{R}(t)u^*_0\|_{X_T^s}\lesssim \|u_0^*\|_{H^s}\lesssim \|u_0\|_{H^s{(\mathbb{R}_+)}}.\end{equation}

In order to estimate the second term at the right hand side of \eqref{Psi}, we will first prove the following lemma:
\begin{lem}[Nonlinearity]\label{fEst} Let $f(u)=|u|^pu$ and $s>\frac{1}{2}$.  Moreover, let $(p,s)$ satisfy one of the following assumptions:
\begin{itemize}
  \item[(a1)] If $s$ is integer, then assume that $p\ge s$ if $p$ is an odd integer and $[p]\ge s-1$ if $p$ is non-integer.
  \item[(a2)] If $s$ is non-integer, then assume that $p>s$ if $p$ is an odd integer and $[p]\ge [s]$ if $p$ is non-integer.
\end{itemize}

If $u,v\in H^s$,  then \begin{equation}\label{fEstLem1}\|f(u)\|_{H^s}\lesssim\|u\|_{H^s}^{p+1},\end{equation}
%\begin{equation}\label{fL2EstLem1}\|f(u)-f(v)\|_{L^2}\lesssim(\|u\|_{H^s}^{p}+\|v\|_{H^s}^{p})\|u-v\|_{L^2},\end{equation} and
\begin{equation}\|f(u)-f(v)\|_{H^s}\lesssim (\|u\|_{H^s}^{p}+\|v\|_{H^s}^{p})\|u-v\|_{H^s}.\end{equation}
\end{lem}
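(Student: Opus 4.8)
The plan is to reduce the estimate \eqref{fEstLem1} to the classical product/composition estimates in $H^s(\mathbb{R})$ by splitting into the integer and non-integer cases, and then to derive the Lipschitz-type bound from the same circle of ideas. First I would record the two standard facts that drive everything: (i) for $s>\frac12$, $H^s$ is an algebra, so $\|fg\|_{H^s}\lesssim \|f\|_{H^s}\|g\|_{H^s}$; and (ii) the Moser-type (tame) estimate for compositions and products, which handles fractional $s$ and non-smooth nonlinearities. The map $z\mapsto |z|^p z$ is smooth away from the origin, and its regularity at the origin is exactly $C^{1,p}$ when $p$ is not an odd integer and $C^\infty$ (in fact a polynomial in $z,\bar z$) when $p$ is an odd integer; assumptions (a1)–(a2) are precisely what is needed for the nonlinearity to be at least as regular as the target space $H^s$, which is the borderline requirement for the Moser estimate to apply.

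The key steps, in order, are as follows. \emph{Step 1 (integer $s$).} Write $f(u)=|u|^pu$ and expand $\partial_x^s f(u)$ by the chain rule; every term is a product of at most $s$ factors, each a derivative of $u$ of order $\le s$, multiplied by a derivative of the ``symbol'' $|z|^pz$ evaluated at $u$. When $p$ is an odd integer, $|z|^pz = z^{(p+1)/2}\bar z^{(p-1)/2}\cdot(\text{normalization})$ is a polynomial, and the condition $p\ge s$ guarantees enough factors of $u$ survive so that no derivative of order $>s$ of the symbol is needed and all low-order factors can be placed in $L^\infty\hookleftarrow H^s$; Hölder plus the Gagliardo–Nirenberg interpolation inequalities $\|\partial^j u\|_{L^{p_j}}\lesssim \|u\|_{L^\infty}^{1-j/s}\|\partial^s u\|_{L^2}^{j/s}$ then yield $\|f(u)\|_{\dot H^s}\lesssim \|u\|_{L^\infty}^p\|u\|_{\dot H^s}$, and combining with the $L^2$ bound and Sobolev embedding $H^s\hookrightarrow L^\infty$ gives \eqref{fEstLem1}. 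When $p$ is non-integer, $|z|^pz\in C^{[p]+1}$ minus a bit, and $[p]\ge s-1$ says the symbol is $C^s$, which is exactly what the Moser composition estimate $\|F(u)\|_{\dot H^s}\lesssim \|F\|_{C^{s}}(1+\|u\|_{L^\infty})^{?}\|u\|_{\dot H^s}$ requires. \emph{Step 2 (non-integer $s$).} Here I would invoke the fractional Moser/Meyer-type estimate directly: for $F\in C^{\lceil s\rceil}$ with $F(0)=0$, $\|F(u)\|_{H^s}\lesssim C(\|u\|_{L^\infty})\|u\|_{H^s}$; the odd-integer case needs $p>s$ (strict, because one must control $[s]+1 = \lceil s\rceil$ derivatives and the polynomial must still have a factor of $u$ left over to make it vanish at $0$ with the right order), and the non-integer case needs $[p]\ge[s]$ so that $|z|^pz\in C^{[s]+1}$ near the origin. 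Since the estimates are homogeneous of degree $p+1$ in $u$ (because $f$ is), the $C(\|u\|_{L^\infty})$ factor is a pure power $\|u\|_{L^\infty}^p$, and Sobolev embedding converts this to $\|u\|_{H^s}^p$, giving \eqref{fEstLem1}. \emph{Step 3 (difference estimate).} Write $f(u)-f(v)=\int_0^1 \frac{d}{d\theta}f(v+\theta(u-v))\,d\theta = \int_0^1 \big[f_z(w_\theta)(u-v) + f_{\bar z}(w_\theta)\overline{(u-v)}\big]d\theta$ with $w_\theta = v+\theta(u-v)$, apply the algebra/tame estimate to each product, and note that $f_z$ and $f_{\bar z}$ are of the same type as $f$ but with $p$ replaced by $p-1$ (hence one degree less regular, which is why the hypotheses are stated with exactly the slack needed), so $\|f_z(w_\theta)\|_{H^s}\lesssim \|w_\theta\|_{H^s}^p \lesssim (\|u\|_{H^s}+\|v\|_{H^s})^p$; integrating in $\theta$ and using $(\|u\|+\|v\|)^p\lesssim \|u\|^p+\|v\|^p$ yields the claimed bound.

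The main obstacle I anticipate is \emph{the bookkeeping at the endpoints of the regularity conditions}, i.e.\ verifying that hypotheses (a1)–(a2) are exactly sharp for each of the four sub-cases (integer vs.\ non-integer $s$, crossed with odd-integer vs.\ non-integer $p$). For odd integer $p$ the nonlinearity is a polynomial and the constraint comes from needing enough ``surviving'' factors of $u$ in the Leibniz expansion so that every term can be estimated by placing all but one factor in $L^\infty$ (via $H^s\hookrightarrow L^\infty$) — one must check this count is correct at $p=s$ (integer case, allowed) versus $p>s$ (non-integer case, strict), which hinges on whether $\lceil s\rceil = s$ or $\lceil s\rceil = [s]+1$. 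For non-integer $p$ the constraint is that $z\mapsto|z|^pz$ is $C^{[p]+1}$-away-from-optimal at the origin, so one needs $[p]+1 \ge \lceil s \rceil$, i.e.\ $[p]\ge s-1$ when $s\in\mathbb{Z}$ and $[p]\ge[s]$ otherwise; this requires being careful about the precise modulus-of-continuity statement of the Moser composition estimate for fractional Sobolev spaces (it is cleanest to cite it, e.g.\ from Taylor or Runst–Sickel, rather than reprove it). Everything else — Gagliardo–Nirenberg interpolation, Hölder, Sobolev embedding — is routine once the correct expansion is set up.
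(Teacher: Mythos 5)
Your Steps 1 and 2 are essentially sound and not far from the paper: for integer $s$ the paper simply cites Cazenave's Lemma 4.10.2, whose proof is the Leibniz/Gagliardo--Nirenberg argument you sketch, and for non-integer $s$ your appeal to a Moser-type composition estimate for $F\in C^{\lceil s\rceil}$ with $F(0)=0$ is a legitimate repackaging of what the paper does by hand with the Kenig--Ponce--Vega fractional chain and Leibniz rules; your calibration is correct there, since $\nabla^k(|z|^pz)\sim|z|^{p+1-k}$ gives $|z|^pz\in C^{[p]+1}$ and $[p]\ge[s]$ yields $C^{[s]+1}=C^{\lceil s\rceil}$.

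The gap is in Step 3. Writing $f(u)-f(v)=\int_0^1\bigl[f_z(w_\theta)(u-v)+f_{\bar z}(w_\theta)\overline{(u-v)}\bigr]d\theta$ and applying the algebra (or tame product) estimate forces you to bound $\|f_z(w_\theta)\|_{H^s}$, i.e.\ to run the composition estimate for $G(z)\sim|z|^p$, whose $k$-th gradient is $\sim|z|^{p-k}$, so $G$ is only $C^{[p]}$ near the origin. To have $G\in C^{\lceil s\rceil}$ you would need $[p]\ge[s]+1$, one derivative more than hypothesis (a2) supplies; contrary to your remark, the hypotheses carry no such slack --- they are calibrated exactly for $f$, not for $f_z$. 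Concretely, take $s=1.5$, $p=1.2$, which (a2) allows: then $f_z\sim|z|^{1.2}\in C^{1,0.2}$ only, and $\nabla^2(|z|^{1.2})\sim|z|^{-0.8}$ is unbounded, so no $C^{\lceil s\rceil}$ composition theorem gives $\|\,|w_\theta|^{1.2}\|_{H^{1.5}}\lesssim C(\|w_\theta\|_{L^\infty})\|w_\theta\|_{H^{1.5}}$. The paper avoids this loss by never placing $f^{(k)}(u)-f^{(k)}(v)$ into $H^s$: it telescopes the Leibniz expansion of $D^{[s]}\bigl(f(u)-f(v)\bigr)$, estimates the differences $f^{(k)}(u)-f^{(k)}(v)$ only in $L^\infty$ via the pointwise bound $|f^{(k)}(u)-f^{(k)}(v)|\lesssim(|u|^{p-k}+|v|^{p-k})|u-v|$, and handles the residual fractional derivative $D^\sigma$, $\sigma=s-[s]<1$, with the fractional Leibniz rule together with the KPV fractional chain rule, which needs only one classical derivative of the composed function. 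To repair your argument you must either adopt this term-by-term strategy for the difference estimate or strengthen the hypothesis to $[p]\ge[s]+1$.
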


\begin{proof}  See Lemma 4.10.2 \cite{Cazenave2003} for $s$ being an integer and Lemma 3.10(2) \cite{Kri} for $p$ being an even number.    Therefore, we will only consider the cases with $s$ being a non-integer, and $p$ being an odd integer or non-integer.

Let us first consider the case $1/2<s<1$.  By the chain rule (Theorem A.7 \cite{KPV93b}), $\|D^sf(u)\|_{L^2}\lesssim \|f'(u)\|_{L^\infty}\|D^su\|_{L^2}.$ Since $|f'(u)|\lesssim |u|^{p}$, we have $\|f'(u)\|_{L^\infty}\lesssim \|u\|_{L^\infty}^p\lesssim \|u\|_{H^s}^p$ where the last inequality follows by the Sobolev embedding $H^s\hookrightarrow L^\infty$ for $s>1/2$.  Also, $\|D^su\|_{L^2}\le \|u\|_{H^s}.$  It follows that $\|D^sf(u)\|_{L^2}\lesssim \|u\|_{H^s}^{p+1}$. On the other hand, $\|f(u)\|_{L^2}=\|u\|_{L^{2p+2}}^{p+1}\lesssim \|u\|_{H^s}^{p+1}$, where the inequality follows by the Sobolev's embedding $H^s\hookrightarrow L^{2p+2}$ for $s>\frac{1}{2}$.  Hence, we have just shown that $\|f(u)\|_{H^s}\lesssim \|u\|_{H^s}^{p+1}$.

Now, consider the case $s=\sigma+m>1$ for some positive integer $m$ and $\sigma\in (0,1)$.  Then, $\|D^sf(u)\|_{L^2}\lesssim \|D^\sigma (D^mf(u))\|_{L^2}$ where $D^mf(u)$ is a sum of the terms of type $f^{(k)}(u)\prod_{j=1}^kD^{\beta_j}u$ where $k$ ranges from $k=1$ up to $k=m$ and $\sum_{j=1}^k\beta_j=m$.

%Now, we will consider three cases depending on the value of $\sigma$.  The first case is $\sigma\in (1/2,1)$.  In this case, we first write
%\begin{multline}\|D^\sigma (f^{(k)}(u)\prod_{j=1}^kD^{\beta_j}u)\|_{L^2}\lesssim \\ \|f^{(k)}(u)\|_{H^\sigma}\|\prod_{j=1}^kD^{\beta_j}u\|_{L^\infty}+\|f^{(k)}(u)\|_{L^\infty}\|\prod_{j=1}^kD^{\beta_j}u\|_{H^\sigma}.\end{multline}

By the fractional version of the Leibniz rule \cite{KPV93b}, we can write \begin{multline}\label{Impiden}\|D^\sigma (f^{(k)}(u)\prod_{j=1}^kD^{\beta_j}u)\|_{L^2}\lesssim\|D^\sigma(f^{(k)}(u))\|_{L^{p_1}}\|\prod_{j=1}^kD^{\beta_j}u\|_{L^{p_2}}\\
+\|f^{(k)}(u)\|_{L^\infty}\|D^\sigma(\prod_{j=1}^kD^{\beta_j}u)\|_{L^2}=I\cdot II+III\cdot IV. \end{multline} together with $\frac{1}{2}=\frac{1}{p_1}+\frac{1}{p_2}$, $p_1,p_2>2$.  By using the chain rule, the first term is estimated as $I\lesssim \|f^{(k+1)}(u)\|_{L^{q_1}}\|D^\sigma u\|_{L^{q_2}}$ together with $\frac{1}{p_1}=\frac{1}{q_1}+\frac{1}{q_2}$, $q_1,q_2>p_1>2$.  Here, we choose $q_1$ sufficiently large so that $q_1(p-k)>2$. Therefore, $\|f^{(k+1)}(u)\|_{L^{q_1}}\lesssim  \|u\|_{L^{q_1(p-k)}}^{p-k}\lesssim \|u\|_{H^s}^{p-k}$ and $\|D^\sigma u\|_{L^{q_2}}\le \|D^\sigma u\|_{H^m}\lesssim \|u\|_{H^s}$.  If $k=1$ (therefore $\beta_1=m$), then the second term can be estimated as $II=\|D^{m}u\|_{L^{p_2}}\lesssim \|D^{m}u\|_{H^\sigma}\lesssim \|u\|_{H^s}$.  In the last estimate, if $\sigma<1/2$, then we choose $p_2$ as $\frac{1}{p_2}=\frac{1}{2}-\sigma$, otherwise we can use any $p_2>2$.  If $k>1$, then using Hölder's inequality $$\|\prod_{j=1}^kD^{\beta_j}u\|_{L^{p_2}}\le \prod_{j=1}^k\|D^{\beta_j}u\|_{L^{q_j}}\lesssim \prod_{j=1}^k\|D^{\beta_j}u\|_{H^{1+\sigma}}\lesssim \|u\|_{H^s}^k$$ where $\frac{1}{p_2}=\sum_{j=1}^k\frac{1}{q_j}$ and $q_j>2$.  Hence, it follows that we always have $I\cdot II\lesssim \|u\|_{H^s}^{p+1}$.
The third term can be easily estimated as $III\lesssim \|u\|_{L^\infty}^{p-k+1}.$  Regarding the fourth term, the case $k=1$ is trivial.  So let us consider the case $k>1$.  In this case, applying the Leibniz formula, we have $\|D^\sigma(\prod_{j=1}^kD^{\beta_j}u)\|_{L^2}\lesssim \sum_{l=1}^k\|D^{\sigma+\beta_l}u\|_{L^{q_l}}\prod_{j=1,j\neq l}^k\|D^{\beta_j}u\|_{L^{q_j}}$ for some $\{q_j>2,j=1,...,k\}$ such that $\sum_{j=1}^k\frac{1}{q_j}=\frac{1}{2}$.  But the right hand side of the last inequality is dominated by $$\sum_{l=1}^k\|D^{\sigma+\beta_l}u\|_{H^{m-\beta_l}}\prod_{j=1,j\neq l}^k\|D^{\beta_j}u\|_{H^{s-\beta_j}}\lesssim \|u\|_{H^s}^k.$$ Hence, it follows that $III\cdot IV\lesssim \|u\|_{H^s}^{p+1}$.
By the above estimates, we deduce \eqref{fEstLem1}.

%Since, $\left||u|^pu-|v|^pv\right|\lesssim (|u|^p+|v|^p)|u-v|$ and $H^s\hookrightarrow L^{\infty}$ for $s>1/2$, it follows that $$\||u|^pu-|v|^pv\|_{L^2}\lesssim (\|u\|_{H^s}^p+\|v\|_{H^s}^p)\|u-v\|_2.$$

Regarding the differences, let us first consider the case $1/2<s<1$ again.  Then, by using the fractional chain rule and the fact that $H^s\hookrightarrow L^{\infty}$, we get \begin{multline}\|D^sf(u)-D^sf(v)\|_{L^2}\lesssim \|f'(u)-f'(v)\|_{L^\infty}\|D^s u-D^sv\|_{L^2}\\
\lesssim (\|u\|_{L^\infty}^p+\|v\|_{L^\infty}^p)\|u-v\|_{H^s}\lesssim (\|u\|_{H^s}^p+\|v\|_{H^s}^p)\|u-v\|_{H^s}.\end{multline}

Now, we consider the case $s=\sigma+m>1$ for some positive integer $m$ and $\sigma\in (0,1)$.  Then, $$\|D^sf(u)-D^sf(v)\|_{L^2}\lesssim \|D^\sigma (D^m(f(u)-f(v)))\|_{L^2}$$ where $D^m\left(f(u)-f(v)\right)$ is a sum of the terms of type \begin{multline}f^{(k)}(u)\prod_{j=1}^kD^{\beta_j}u-f^{(k)}(v)\prod_{j=1}^kD^{\beta_j}v\\
=\left(f^{(k)}(u)-f^k(v)\right)\prod_{j=1}^kD^{\beta_j}u-f^{(k)}(v)\prod_{j=1}^kD^{\beta_j}w_j\end{multline} where $k$ ranges from $k=1$ up to $k=m$, $\sum_{j=1}^k\beta_j=m$, and $w_j$'s are equal to $u$ or $v$, except one, which is equal to $u-v$.  Now the $L^2$-norm of the term $D^\sigma (f^{(k)}(v)\prod_{j=1}^kD^{\beta_j}w_j)$ can be estimated in a manner similar to \eqref{Impiden} using the fractional Leibniz rule, except we also use several applications of Young's inequality to separate the products involving $u$ and $v$.  What remains is to estimate the term
$D^\sigma\left[\left(f^{(k)}(u)-f^k(v)\right)\prod_{j=1}^kD^{\beta_j}u\right]$, which can also be done as in \eqref{Impiden} using the fractional Leibniz rule. In order to do this, we also use the observation $$\|f^k(u)-f^k(v)\|_{L^\infty}\lesssim \left(\|u\|_{H^s}^{p-k}+\|v\|_{H^s}^{p-k}\right)\|u-v\|_{H^s},$$ which easily follows from the fact that $$|f^k(u)-f^k(v)|\lesssim \left(|u|^{p-k}+|v|^{p-k}\right)|u-v|$$ and the Sobolev embedding $H^s\hookrightarrow L^{\infty}$ for $s>1/2$.

\end{proof}

\begin{rem}The assumption $(a1)$ and $(a2)$ are needed to guarantee that $f$ is sufficiently smooth.  The assumption $(a1)$ guarantees that $f$ is at least $C^m(\mathbb{C},\mathbb{C})$, which is what one needs in the case $s$ is an integer (see Remark 4.10.3 \cite{Cazenave2003}).  Since $f$ is $C^\infty(\mathbb{C},\mathbb{C})$ when $p$ is even, no assumption was necessary in this case. If $s$ is fractional, the proof uses the $m+1$-th derivative, which forces us to make the second assumption $(a2)$.  \end{rem}

It follows from Lemma \ref{VarPar} and Lemma \ref{fEst} that
\begin{multline}\left\|-i\int_0^tW_\mathbb{R}(t-\tau)f(u^*(\tau))d\tau\right\|_{X_T^s}\le \int_0^T\|f(u^*(\tau))\|_{H^s}d\tau\\
\lesssim \int_0^T\|u^*(\tau)\|_{H^s}^{p+1}d\tau\lesssim \int_0^T\|u(\tau)\|_{H^s(\mathbb{R}_+)}^{p+1}d\tau\le T\|u\|_{X_T^s}^{p+1}.\end{multline}

Similarly,
\begin{multline}\left\|-i\int_0^tW_\mathbb{R}(t-\tau)[f(u^*(\tau))-f(v^*(\tau))]d\tau\right\|_{X_T^s}\\
\le \int_0^T\|f(u^*(\tau))-f(v^*(\tau))\|_{H^s}d\tau\\
\lesssim \int_0^T(\|u^*(\tau)\|_{H^s}^p+\|v^*(\tau)\|_{H^s}^p)\|u^*(\tau)-v^*(\tau)\|_{H^s}d\tau\\
\lesssim \int_0^T(\|u(\tau)\|_{H^s(\mathbb{R}_+)}^p+\|v(\tau)\|_{H^s(\mathbb{R}_+)}^p)\|u(\tau)-v(\tau)\|_{H^s(\mathbb{R}_+)}d\tau\\
\lesssim T(\|u\|_{X_T^s}^p+\|v\|_{X_T^s}^p)\|u-v\|_{X_T^s}.\end{multline}

For $s\in \left(\frac{1}{2},\frac{3}{2}\right)$, since $g=p=0$, the last term in \eqref{Psi} is estimated as follows by using Theorem \ref{RegThm1}.
\begin{equation}\label{WbEst1}\|W_b(\cdot)h_e\|_{X_T^s}\le C(1+T)\|h\|_{H^{\frac{2s-1}{4}}(0,T)}.
\end{equation}

For $s\in \left(\frac{3}{2},\frac{7}{2}\right)$, we have the assumption $h(0)=u_0'(0)$, and therefore $h-g-p$ vanishes at $x=0$.  Moreover, the following estimate holds true.
\begin{multline}\label{Wbhgp}
\|W_b(\cdot)([h-g-p]_e)\|_{X_T^s}\le C(1+T)\|h-g-p(u^*)\|_{H^{\frac{2s-1}{4}}(0,T)}\\
\le C(1+T)\left(\|h\|_{H^{\frac{2s-1}{4}}(0,T)}+\|g\|_{H^{\frac{2s-1}{4}}(0,T)}+\|p(u^*)\|_{H^{\frac{2s-1}{4}}(0,T)}\right).
\end{multline}

Note that, \begin{multline}\label{gEst}\|g\|_{H^{\frac{2s-1}{4}}(0,T)}=\|\partial_x W_\mathbb{R}(t)u^*_0|_{x=0}\|_{H^{\frac{2s-1}{4}}(0,T)}\\
\le \sup_{x\in\mathbb{R}_+}\|\partial_x W_\mathbb{R}(t)u^*_0\|_{H^{\frac{2s-1}{4}}(0,T)}\\
\le \|\frac{d}{dx}u_0^*\|_{H^{s-1}}\le \|u_0^*\|_{H^{s}}\lesssim \|u_0\|_{H^s(\mathbb{R}_+)}.\end{multline}

In \eqref{gEst}, the second inequality follows from Lemma \ref{RTheory1} and the fact that $\partial_x W_\mathbb{R}(t)u^*_0$ is a solution of the linear Schrödinger equation on $\mathbb{R}$ with initial condition $\frac{d}{dx}u_0^*$.

Similarly,
\begin{multline}\label{pEst}\|p(u^*)\|_{H^{\frac{2s-1}{4}}(0,T)}=\|-i\partial_x\int_0^tW_\mathbb{R}(t-\tau)f(u^*(\tau))d\tau|_{x=0}\|_{H^{\frac{2s-1}{4}}(0,T)}\\
\le \sup_{x\in\mathbb{R}_+}\|-i\partial_x\int_0^tW_\mathbb{R}(t-\tau)f(u^*(\tau))d\tau\|_{H^{\frac{2s-1}{4}}(0,T)}\\
\le \|\partial_xf(u^*)\|_{L^1(0,T;H^{s-1})}\le \|f(u^*)\|_{L^1(0,T;H^{s})}\lesssim T\|u\|_{X_T^s}^{p+1}.\end{multline}

The last term in \eqref{Wbhgp}, \begin{multline}\|p(u^*)-p(v^*)\|_{H^{\frac{2s-1}{4}}(0,T)}\\
\lesssim T(\|u(\tau)\|_{X_T^s}^p+\|v(\tau)\|_{X_T^s}^p)\|u(\tau)-v(\tau)\|_{X_T^s}. \end{multline}

Combining above estimates, we obtain $$\|\Psi(u)\|_{X_T^s}\le C\left(\|u_0\|_{H^s(\mathbb{R}_+)}+(1+T)\|h\|_{H^{\frac{2s-1}{4}}(0,T)}+T\|u\|_{X_T^s}^{p+1}\right).$$

Similarly, regarding the differences, again by above estimates, we have $$\|\Psi(u)-\Psi(v)\|_{X_T^s}\le C\left( T(\|u(\tau)\|_{X_T^s}^p+\|v(\tau)\|_{X_T^s}^p)\|u(\tau)-v(\tau)\|_{X_T^s}\right).$$

Now, let $A:=C\left(\|u_0\|_{H^s(\mathbb{R}_+)}+(1+T)\|h\|_{H^{\frac{2s-1}{4}}(0,T)}\right)$, $R=2A$ and $T$ be small enough that $A+CTR^{p+1}<2A$.  Now, if necessary we can choose $T$ even smaller so that $\Psi$ becomes a contraction on $\bar{B}_R(0)\subset X_T^s$, which is a complete space.  Hence, $\Psi$ must have a unique fixed point in $\bar{B}_R(0)$ when we look for a solution whose lifespan is sufficiently small.

We conclude this section with the proposition below.

\begin{prop}\label{PropExt}Let $T>0$, $s\in \left(\frac{1}{2},\frac{7}{2}\right) -\left\{\frac{3}{2}\right\}$, $p,r>0$, $u_0\in H^s(\mathbb{R_+})$, $h\in H^{\frac{2s-1}{4}}(0,T)$, and $u_0'(0)=h(0)$ whenever $s>\frac{3}{2}$.  We in addition assume (a1)-(a2) given in Lemma \ref{fEst}.  Then, \eqref{MainProb2} has a local solution $u\in X_{T_0}^s$ for some ${T_0\in (0,T]}$.\end{prop}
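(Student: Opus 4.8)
The plan is to prove Proposition \ref{PropExt} by the Banach fixed point theorem applied to the operator $\Psi$ defined in \eqref{Psi}, exactly along the lines sketched in the preceding subsection. First I would fix $s\in\left(\frac12,\frac72\right)-\{\frac32\}$ and split the argument into the two ranges $s\in\left(\frac12,\frac32\right)$ and $s\in\left(\frac32,\frac72\right)$, since the definition of $\Psi$ differs (the boundary correction traces $g$ and $p(u^*)$ are set to zero in the low-regularity range, whereas in the high-regularity range the compatibility hypothesis $u_0'(0)=h(0)$ must be invoked to ensure that $[h-g-p(u^*)]$ vanishes at $t=0$, so that its zero-extension lies in the right Sobolev space and Lemma \ref{Extension} and Theorem \ref{RegThm1} apply). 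In both cases the goal is to produce $R>0$ and $T_0\in(0,T]$ so that $\Psi$ maps the closed ball $\bar B_R(0)\subset X_{T_0}^s$ into itself and is a strict contraction there.

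Next I would collect the term-by-term estimates already derived: Lemma \ref{RTheory1} controls $W_\mathbb{R}(t)u_0^*$ by $\|u_0\|_{H^s(\mathbb{R}_+)}$; Lemmas \ref{VarPar} and \ref{fEst} (whose hypotheses are guaranteed by the assumed (a1)--(a2)) give $\|\int_0^tW_\mathbb{R}(t-\tau)f(u^*)d\tau\|_{X_{T_0}^s}\lesssim T_0\|u\|_{X_{T_0}^s}^{p+1}$ and the corresponding Lipschitz bound $\lesssim T_0(\|u\|_{X_{T_0}^s}^p+\|v\|_{X_{T_0}^s}^p)\|u-v\|_{X_{T_0}^s}$; Theorem \ref{RegThm1} bounds the boundary term $W_b([h-g-p(u^*)]_e)$, and the auxiliary estimates \eqref{gEst}, \eqref{pEst} handle $\|g\|_{H^{(2s-1)/4}(0,T_0)}\lesssim\|u_0\|_{H^s(\mathbb{R}_+)}$ and $\|p(u^*)\|_{H^{(2s-1)/4}(0,T_0)}\lesssim T_0\|u\|_{X_{T_0}^s}^{p+1}$ together with the analogous difference estimate for $p(u^*)-p(v^*)$. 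Assembling these yields
\begin{equation*}
\|\Psi(u)\|_{X_{T_0}^s}\le C\Bigl(\|u_0\|_{H^s(\mathbb{R}_+)}+(1+T_0)\|h\|_{H^{\frac{2s-1}{4}}(0,T_0)}+T_0\|u\|_{X_{T_0}^s}^{p+1}\Bigr)
\end{equation*}
and
\begin{equation*}
\|\Psi(u)-\Psi(v)\|_{X_{T_0}^s}\le C\,T_0\bigl(\|u\|_{X_{T_0}^s}^p+\|v\|_{X_{T_0}^s}^p\bigr)\|u-v\|_{X_{T_0}^s}.
\end{equation*}

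Then I would close the argument by the standard choice of parameters: set $A:=C\bigl(\|u_0\|_{H^s(\mathbb{R}_+)}+(1+T)\|h\|_{H^{(2s-1)/4}(0,T)}\bigr)$, put $R:=2A$, and choose $T_0\in(0,T]$ small enough that $C T_0 R^{p+1}\le A$ and $2^{p+1}C T_0 R^{p}\le\frac12$. The first inequality gives $\|\Psi(u)\|_{X_{T_0}^s}\le A+C T_0 R^{p+1}\le 2A=R$ for $u\in\bar B_R(0)$, so $\Psi$ is a self-map of the ball; the second gives $\|\Psi(u)-\Psi(v)\|_{X_{T_0}^s}\le\frac12\|u-v\|_{X_{T_0}^s}$, so $\Psi$ is a contraction. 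Since $\bar B_R(0)$ is a closed subset of the Banach space $X_{T_0}^s$ (completeness being part of the definition of $X_{T_0}^s$), the contraction mapping principle furnishes a unique fixed point $u\in\bar B_R(0)$, which by construction of the representation formula \eqref{Psi} solves \eqref{MainProb2} on $(0,T_0)$; noting $T_0=T_0(\|u_0\|_{H^s(\mathbb{R}_+)},\|h\|_{H^{(2s-1)/4}(0,T)})$ completes the proof.

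I do not expect a serious obstacle here, since all the analytic work has been done in Sections \ref{SectionLin} and \ref{LocExt}; the only point requiring care is the high-regularity range $s>\frac32$, where one must verify that the compatibility condition $u_0'(0)=h(0)$ indeed forces $h-g-p(u^*)$ to vanish at $t=0$. This uses that $g(0)=\partial_xW_\mathbb{R}(0)u_0^*|_{x=0}=(u_0^*)'(0)=u_0'(0)$ and $p(u^*)(0)=0$, so $[h-g-p(u^*)](0)=h(0)-u_0'(0)=0$, which is precisely the hypothesis needed to apply Lemma \ref{Extension} and hence Theorem \ref{RegThm1} to the last term of \eqref{Psi}; everything else is a mechanical assembly of the estimates above.
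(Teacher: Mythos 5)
Your proposal is correct and follows essentially the same route as the paper's own argument in Section \ref{LocExt}: the same operator $\Psi$ from \eqref{Psi}, the same term-by-term estimates (Lemma \ref{RTheory1} for the free evolution, Lemmas \ref{VarPar} and \ref{fEst} for the Duhamel term, Theorem \ref{RegThm1} together with \eqref{gEst}--\eqref{pEst} for the boundary term), and the same choice $R=2A$ with $T_0$ small. Your explicit verification that the compatibility condition forces $[h-g-p(u^*)](0)=0$ in the range $s>\frac{3}{2}$ is a point the paper only asserts, so spelling it out is a welcome, if minor, addition.
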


\subsection{Uniqueness}\label{Uniq} In the previous section, we have proved uniqueness in a fixed ball in the space $X_T^s$.  This does not immediately tell us that the solution must also be unique in the entire space.  Fortunately, this latter statement is also true.  In order to show this, let $u_1,u_2\in X_{T_0}^s$ be two solutions of \eqref{MainProb2}.  Then, \begin{multline}u_1(t)-u_2(t)=-i\int_0^tW_\mathbb{R}(t-s)[f(u_1^*(s))-f(u_2^*(s))]ds\\
+W_b(t)([p(u_2^*)-p(u_1^*)]_e)\end{multline} for a.a. $t\in [0,T_0]$.

Since $s>1/2$,  \begin{multline}\|u_1(t)-u_2(t)\|_{H^s}\\
\le \int_0^{T_0}\|f(u_1^*(s))-f(u_2^*(s))\|_{H^s}+C(1+T_0)\|p(u_2^*)-p(u_1^*)\|_{H^{\frac{2s-1}{4}}(0,T)}\\
\le C(1+T_0)\int_0^{T_0}\|u_1(s)-u_2(s)\|_{H^s}(\|u_1(s)\|_{H^s}^p+\|u_2(s)\|_{H^s}^p)ds\\
\le C(1+T_0)(\|u_1(s)\|_{X_{T_0}^s}^p+\|u_2(s)\|_{X_{T_0}^s}^p)\int_0^{T_0}\|u_1(s)-u_2(s)\|_{H^s}ds.\end{multline}  By Gronwall's inequality, $\|u_1(t)-u_2(t)\|_{H^s}=0$, which implies $u_1\equiv u_2$.

Now, we can state the uniqueness statement as follows.
\begin{prop}If $u_1,u_2$ are two local solutions of \eqref{MainProb2} in $X_{T_0}^s$ as in Proposition \ref{PropExt}, then $u_1\equiv u_2$.\end{prop}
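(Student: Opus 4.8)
The plan is to feed the difference of the two solutions into the integral representation \eqref{Psi} and reduce uniqueness to a self-improving integral inequality. Set $w:=u_1-u_2$ and subtract the representation formulas for $u_1$ and $u_2$. Since both solutions share the same initial datum $u_0$ and the same boundary datum $h$, the terms $W_\mathbb{R}(t)u_0^*$ and $W_b(t)h_e$ cancel, and so does the contribution of $g(t)=\partial_xW_\mathbb{R}(t)u_0^*|_{x=0}$ in the case $s>\frac{3}{2}$. Hence, for a.e. $t\in[0,T_0]$,
\[
w(t)=-i\int_0^tW_\mathbb{R}(t-\tau)\big[f(u_1^*(\tau))-f(u_2^*(\tau))\big]\,d\tau+W_b(t)\big([p(u_2^*)-p(u_1^*)]_e\big),
\]
where $u_i^*$ denotes the fixed $H^s$-extension of $u_i$.

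Next I would estimate $\|w(t)\|_{H^s(\mathbb{R}_+)}$ term by term, working on an interval $[0,\delta]$ with $\delta\in(0,T_0]$ to be chosen. For the Duhamel term, since $W_\mathbb{R}(t)$ is unitary on $H^s$, one obtains the bound $\int_0^t\|f(u_1^*(\tau))-f(u_2^*(\tau))\|_{H^s}\,d\tau$. For the boundary term, Theorem \ref{RegThm1} gives
\[
\big\|W_b(t)\big([p(u_2^*)-p(u_1^*)]_e\big)\big\|_{H^s(\mathbb{R}_+)}\le C(1+\delta)\,\|p(u_2^*)-p(u_1^*)\|_{H^{\frac{2s-1}{4}}(0,\delta)},
\]
and, exactly as in the derivation of \eqref{pEst} (bounding the boundary trace by Lemma \ref{VarPar} applied with $s$ replaced by $s-1$, together with $\|\partial_x v\|_{H^{s-1}}\le\|v\|_{H^s}$), the right-hand side is $\lesssim\int_0^\delta\|f(u_1^*(\tau))-f(u_2^*(\tau))\|_{H^s}\,d\tau$. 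Applying the Lipschitz bound of Lemma \ref{fEst}, the $H^s$-boundedness of the extension operator, and the fact that $u_1,u_2\in X_{T_0}^s$ (so that $\|u_i(\tau)\|_{H^s(\mathbb{R}_+)}\le M:=\max\{\|u_1\|_{X_{T_0}^s},\|u_2\|_{X_{T_0}^s}\}$), I arrive at
\[
\sup_{t\in[0,\delta]}\|w(t)\|_{H^s(\mathbb{R}_+)}\le C(1+T_0)\,M^p\,\delta\,\sup_{t\in[0,\delta]}\|w(t)\|_{H^s(\mathbb{R}_+)}.
\]

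The main obstacle is that, unlike the Duhamel term, the boundary term is genuinely nonlocal in time, so one cannot set up the pointwise inequality of the form $\phi(t)\le C\int_0^t\phi$ needed for a direct Gronwall argument; what one gets is the averaged inequality above, which is useful only when its constant is strictly less than $1$. This is circumvented by choosing $\delta>0$ small enough (depending only on $M$ and $T_0$, not on $t$) that $C(1+T_0)M^p\delta<1$, which forces $w\equiv0$ on $[0,\delta]$. Then, since $u_1$ and $u_2$ restricted to $[\delta,T_0]$ solve the same type of problem with now coinciding initial states $u_1(\cdot,\delta)=u_2(\cdot,\delta)$ and the same shifted boundary datum $h(\cdot+\delta)$ — the main equation in \eqref{MainProb2} being autonomous — the identical estimate yields $w\equiv0$ on $[\delta,2\delta]$. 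Iterating over the $\lceil T_0/\delta\rceil$ subintervals of length $\delta$ covering $[0,T_0]$ gives $u_1\equiv u_2$ on all of $[0,T_0]$.
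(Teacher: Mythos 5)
Your proposal is correct, and up to the last step it is exactly the paper's argument: subtract the two Duhamel representations, note that the $W_\mathbb{R}(t)u_0^*$, $W_b(t)h_e$ and $g$ contributions cancel, bound the interior term by $\int_0^t\|f(u_1^*)-f(u_2^*)\|_{H^s}\,d\tau$ via Lemma \ref{VarPar}, and bound the boundary term through Theorem \ref{RegThm1} and the trace estimate \eqref{pEst}, then invoke the Lipschitz bound of Lemma \ref{fEst}. Where you diverge is the closing step. The paper arrives at the inequality $\|w(t)\|_{H^s}\le C(1+T_0)\bigl(\|u_1\|_{X_{T_0}^s}^p+\|u_2\|_{X_{T_0}^s}^p\bigr)\int_0^{T_0}\|w(s)\|_{H^s}\,ds$ and concludes "by Gronwall"; as you correctly observe, the boundary contribution makes the right-hand side an integral over all of $(0,T_0)$ rather than $(0,t)$, so the standard Gronwall lemma does not apply verbatim (one would either need a causality argument to replace $\int_0^{T_0}$ by $\int_0^{t}$, or smallness of the constant). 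Your fix --- restrict to $[0,\delta]$, use that the restricted solutions satisfy the representation formula relative to $(0,\delta)$ with $T$-independent constants, absorb the right-hand side for $C(1+T_0)M^p\delta<1$, and then iterate over subintervals using that the equation is autonomous and the shifted data remain admissible --- is rigorous and buys a cleaner treatment of the nonlocal-in-time boundary operator at the cost of an extra continuation step. Both routes rely on the same estimates; yours makes explicit the localization that the paper's Gronwall invocation leaves implicit.
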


\subsection{Continuous dependence}\label{ContDep}
Regarding continuous dependence on data, let $B$ be a bounded subset of $H^s(\mathbb{R}_+)\times H^{\frac{2s-1}{4}}(0,T)$.
Let $(u_0, h_1)\in B$ and $(v_0,h_2)\in B$.  Let $u, v$ be two solutions on a common time interval $(0,T_0)$ corresponding to  $(u_0,h_1)$ and $(v_0,h_2)$, respectively.  Then $w=u-v$ satisfies
\begin{equation}\label{StabilityModel} \left\{ \begin{array}{ll}
         i\partial_t w + \partial_x^2w = F(x,t)\equiv f(v)-f(u), & \mbox{$x\in \mathbb{R}_+$, $t\in (0,T)$},\\
         w(x,0)=w_0(x)\equiv (u_0-v_0)(x),\\
         \partial_xw(0,t)=h(t)\equiv(h_1-h_2)(t).\end{array} \right.
         \end{equation}
Now, using the linear theory together with the nonlinear $H^s$ estimates on the differences, we have
$$\|w\|_{X_{T_0}^s}\le C\left(\|w_0\|_{H^s(\mathbb{R}_+)}+(1+T_0)\|h\|_{H^{\frac{2s-1}{4}}(0,T)}+\|F\|_{L^1(0,T_0;H^s(\mathbb{R}_+))}\right),$$
where $$\|F\|_{L^1(0,T_0;H^s(\mathbb{R}_+))}\le CT_0\left(\|u\|_{X_{T_0}^s}^p+\|v\|_{X_{T_0}^s}^p\right)\|u-v\|_{X_{T_0}^s}.$$

Choosing $R$, which depends on $u_0$ and $h$ (i.e., on the bounded set $B$), as in the proof of the local existence, and $T_0$ accordingly small enough, we obtain
\begin{equation}\label{ContDep01}\|u-v\|_{X_{T_0}^s}\le C\left(\|u_0-v_0\|_{H^s(\mathbb{R}_+)}+\|h_1-h_2\|_{H^{\frac{2s-1}{4}}(0,T)}\right).\end{equation}
Hence, we have the following result.

\begin{prop} If $B$ is a bounded subset of $H^s(\mathbb{R}_+)\times H^{\frac{2s-1}{4}}(0,T)$, then there is $T_0>0$ such that the flow $(u_0,h)\rightarrow u$ is Lipschitz continuous from $B$ into $X_{T_0}^s$.\end{prop}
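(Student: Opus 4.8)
The plan is to read the statement off the difference estimates already assembled in this subsection, the only new ingredient being to make the dependence of the radius $R$ and the lifespan $T_0$ on the set $B$ explicit and uniform. First I would fix two admissible data pairs $(u_0,h_1),(v_0,h_2)\in B$ and let $u,v\in X_{T_0}^s$ be the corresponding local solutions furnished by Proposition \ref{PropExt}. The key observation is that, since $B$ is bounded, the quantity $A=C(\|u_0\|_{H^s(\mathbb{R}_+)}+(1+T)\|h_i\|_{H^{\frac{2s-1}{4}}(0,T)})$ controlling the contraction argument is bounded by a single constant $A_0$ depending only on the diameter of $B$; hence, taking $R:=2A_0$ and then $T_0$ small enough (as a function of $A_0$ alone) that $\Psi$ is a contraction on $\bar B_R(0)\subset X_{T_0}^s$ for \emph{every} datum in $B$, we may assume $u,v\in\bar B_R(0)$ and that both are defined on the common interval $[0,T_0]$.

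Next I would note that $w=u-v$ solves the linear inhomogeneous problem \eqref{StabilityModel} with forcing $F=f(v)-f(u)$, boundary datum $h_1-h_2$, and initial datum $w_0=u_0-v_0$; when $s>\tfrac32$ the compatibility condition $w_0'(0)=(h_1-h_2)(0)$ holds by linearity, since $u_0'(0)=h_1(0)$ and $v_0'(0)=h_2(0)$. Applying the linear regularity theorem of Section \ref{SectionLin} gives
\begin{equation*}\|w\|_{X_{T_0}^s}\le C\bigl(\|u_0-v_0\|_{H^s(\mathbb{R}_+)}+(1+T_0)\|h_1-h_2\|_{H^{\frac{2s-1}{4}}(0,T)}+\|F\|_{L^1(0,T_0;H^s(\mathbb{R}_+))}\bigr).\end{equation*}
Then the difference estimate of Lemma \ref{fEst}, applied pointwise in $t$ and integrated over $(0,T_0)$ as in \eqref{pEst}, yields $\|F\|_{L^1(0,T_0;H^s(\mathbb{R}_+))}\le CT_0(\|u\|_{X_{T_0}^s}^p+\|v\|_{X_{T_0}^s}^p)\|u-v\|_{X_{T_0}^s}\le 2CT_0R^p\|w\|_{X_{T_0}^s}$, where the final inequality uses $u,v\in\bar B_R(0)$.

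Finally I would absorb this last term on the left: shrinking $T_0$ once more if necessary (again only in terms of $R$, hence of $B$) so that $2C^2T_0R^p\le\tfrac12$, the inequality collapses to $\|u-v\|_{X_{T_0}^s}\le 2C(\|u_0-v_0\|_{H^s(\mathbb{R}_+)}+(1+T_0)\|h_1-h_2\|_{H^{\frac{2s-1}{4}}(0,T)})$, which is exactly \eqref{ContDep01} and expresses the asserted Lipschitz continuity of $(u_0,h)\mapsto u$ from $B$ into $X_{T_0}^s$. The only genuinely delicate point is the first step, namely that the radius $R$ and the lifespan $T_0$ coming from the contraction scheme can be chosen uniformly over all data in $B$; but this is immediate once one recalls that in the proof of Proposition \ref{PropExt} both $R$ and $T_0$ were determined by $A$ alone, and $A$ is bounded in terms of $\operatorname{diam}(B)$. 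Everything else is a bookkeeping combination of the linear estimate of Section \ref{SectionLin} and the nonlinear difference estimate of Lemma \ref{fEst}.
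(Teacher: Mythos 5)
Your proposal is correct and follows essentially the same route as the paper: view $w=u-v$ as a solution of the linear inhomogeneous problem \eqref{StabilityModel}, apply the linear regularity theorem together with the difference estimate of Lemma \ref{fEst}, and absorb the $T_0(\|u\|^p+\|v\|^p)\|u-v\|$ term by choosing $R$ and $T_0$ uniformly over $B$. The only difference is that you spell out the uniformity of $R,T_0$, the compatibility condition for $w_0$ when $s>\tfrac32$, and the final absorption step, all of which the paper leaves implicit.
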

\subsection{Blow-up alternative}\label{BlowSec} In this section, we want to obtain a condition which guarantees that a given local solution on $[0,T_0]$ can be extended globally.  Let's consider the set $S$ of all $T_0\in (0,T]$ such that there exists a unique local solution in $X_{T_0}^s$.  We claim that if $\displaystyle T_{max}:=\sup_{T_0\in S}T_0<T$, then $\displaystyle\lim_{t\uparrow T_{max}}\|u(t)\|_{H^s(\mathbb{R}_+)}=\infty.$  In order to prove the claim, assume to the contrary that $\displaystyle\lim_{t\uparrow T_{max}}\|u(t)\|_{H^s(\mathbb{R}_+)}\neq\infty.$  Then $\exists M$ and $t_n\in S$ such that $t_n\rightarrow T_{max}$ and $\|u(t_n)\|_{H^s(\mathbb{R}_+)}\le M.$  For a fixed $n$, we know that there is a unique local solution $u_1$ on $[0,t_n]$.  Now, we consider the following model.

\begin{equation}\label{MainProb3} \left\{ \begin{array}{ll}
         i\partial_t u - \partial_x^2u+f(u)= 0, & \mbox{$x\in \mathbb{R}_+$, $t\in (t_n,T)$},\\
         u(x,t_n)=u_1(x,t_n),\\
         \partial_xu(0,t)=h(t).\end{array} \right.
         \end{equation}
We know from the local existence theory that the above model has a unique local solution $u_2$ on some interval $[t_n,t_n+\delta]$ for some $\delta=\delta\left(M,\|h\|_{H^\frac{2s-1}{4}(0,T)}\right)\in (0,T-t_n].$  Now, choose $n$ sufficiently large that $t_n+\delta>T_{max}$.  If we set \begin{equation} u:=\left\{ \begin{array}{ll}
        u_1, & \mbox{$t\in [0,t_n)$},\\
         u_2, & \mbox{$t\in [t_n,t_n+\delta]$},\end{array} \right.\end{equation} then $u$ is a solution of \eqref{MainProb2} on $[0,t_n+\delta]$ where $t_n+\delta>T_{max}$, which is a contradiction.

We have the theorem.

\begin{prop}\label{BlowupAlt1}Let $S$ be the set of all $T_0\in (0,T]$ such that there exists a unique local solution in $X_{T_0}^s$.  If $\displaystyle T_{max}:=\sup_{T_0\in S}T_0<T$, then $\displaystyle\lim_{t\uparrow T_{max}}\|u(t)\|_{H^s(\mathbb{R}_+)}=\infty.$\end{prop}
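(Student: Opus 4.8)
The plan is the classical extension-contradiction scheme, exploiting the time-translation invariance of the local theory built in Sections \ref{SectionLin}--\ref{LocExt}.

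First I would argue by contradiction: suppose $T_{max}<T$ but that it is not the case that $\lim_{t\uparrow T_{max}}\|u(t)\|_{H^s(\mathbb{R}_+)}=\infty$. Since for every $T_0\in S$ the solution lies in $C([0,T_0];H^s(\mathbb{R}_+))$, negating the limit statement produces a constant $M>0$ and a sequence $t_n\uparrow T_{max}$ with $t_n\in S$ and $\|u(t_n)\|_{H^s(\mathbb{R}_+)}\le M$ for all $n$. For each fixed $n$, let $u_1$ be the unique solution on $[0,t_n]$ given by Proposition \ref{PropExt} and the uniqueness proposition.

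Next I would restart the problem at time $t_n$: apply the contraction argument of Section \ref{LocExt} to \eqref{MainProb3}, i.e. to \eqref{MainProb2} posed on $(t_n,T)$ with initial datum $u_1(\cdot,t_n)\in H^s(\mathbb{R}_+)$ and boundary datum $h|_{(t_n,T)}$. When $s>\frac32$ the required compatibility condition $\partial_x u_1(0,t_n)=h(t_n)$ holds automatically because $u_1$ solves \eqref{MainProb2}. The key point is that, inspecting the choices of $R=2A$ and the smallness condition $A+CTR^{p+1}<2A$ in Section \ref{LocExt}, the lifespan produced by the contraction depends on the data only through $\|u_1(\cdot,t_n)\|_{H^s(\mathbb{R}_+)}\le M$ and $\|h\|_{H^{(2s-1)/4}(0,T)}$; hence it can be taken to be a fixed $\delta=\delta\bigl(M,\|h\|_{H^{(2s-1)/4}(0,T)}\bigr)\in(0,T-t_n]$, independent of $n$. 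This uses that the constants in Lemma \ref{RTheory1}, Theorem \ref{RegThm1}, and Lemma \ref{fEst} are independent of $T$ and that translating the time origin is a symmetry of the model. This yields a solution $u_2\in X^s$ on $[t_n,t_n+\delta]$.

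Finally I would choose $n$ so large that $t_n+\delta>T_{max}$ (possible since $t_n\to T_{max}$ and $\delta$ is fixed) and glue $u_1$ on $[0,t_n]$ with $u_2$ on $[t_n,t_n+\delta]$; the two agree at $t=t_n$ by construction, so the glued function lies in $X_{t_n+\delta}^s$ and, via the Duhamel/representation formula \eqref{RepForm}, solves \eqref{MainProb2} on $[0,t_n+\delta]$, with uniqueness guaranteeing consistency on overlaps. Thus $t_n+\delta\in S$ with $t_n+\delta>T_{max}$, contradicting the definition of $T_{max}$. The main obstacle is precisely the uniformity claimed in the third paragraph: one must check carefully that the contraction lifespan genuinely depends only on $M$ and $\|h\|_{H^{(2s-1)/4}(0,T)}$ and not on the restart time $t_n$ — that all linear and nonlinear estimates retain $n$-independent constants when the problem is posed on $(t_n,T)$ — together with the trace/compatibility bookkeeping at $t=t_n$ for $s>\frac32$. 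Once that is in place, the gluing and the contradiction are routine.
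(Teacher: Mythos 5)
Your proposal is correct and follows essentially the same extension-contradiction argument as the paper: extract a sequence $t_n\uparrow T_{max}$ with $\|u(t_n)\|_{H^s(\mathbb{R}_+)}\le M$, restart the problem at $t_n$ via \eqref{MainProb3} with a lifespan $\delta=\delta\bigl(M,\|h\|_{H^{(2s-1)/4}(0,T)}\bigr)$ uniform in $n$, and glue to contradict the maximality of $T_{max}$. Your explicit attention to the $n$-independence of $\delta$ and to the compatibility condition at the restart time is a welcome clarification of points the paper leaves implicit, but it is not a different route.
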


\subsection{Nonlinear boundary data}\label{SectionNonlin2}
In this section, we study the most general nonlinear model given in \eqref{MainProb}.  We define the operator $\Psi$ as in \eqref{Psi}, except that we take $h(t)=h(u(0,t))=-\lambda|u(0,t)|^ru(0,t).$   Therefore, the solution operator we have to use for the contraction argument takes the following form.

\begin{multline}\label{PsiNonlinear}[\Psi(u)](t):=W_\mathbb{R}(t)u^*_0-i\int_0^tW_\mathbb{R}(t-\tau)f(u^*(\tau))d\tau\\+W_b(t)([h(u(0,\cdot))-g-p(u^*)]_e)\end{multline}

The proofs of local well-posedness and blow-up alternative now follows similar to the proofs in Sections \ref{LocExt} - \ref{BlowSec}.  The only additional work in this part would be to get nonlinear $H^s$ estimates on the boundary trace $-\lambda|u(0,t)|u(0,t)$, which is of course possible with assumptions on $r$, which are almost equivalent to the assumptions we made on $p$.  Indeed,  we will assume that $r>\frac{2s-1}{4}$ if $r$ is an odd integer and $[r]\ge \left[\frac{2s-1}{4}\right]$ if $r$ is non-integer.

We will need the following lemma to get useful estimates on the boundary operator for the contraction argument.
\begin{lem}\label{fTEst}Let $h\in H^{\sigma+\epsilon}(0,T)$, $\sigma,\epsilon>0$. Then  $\|h\|_{H^\sigma(0,T)}\le T^{\frac{\epsilon}{1+\sigma+\epsilon}}\|h\|_{H^{\sigma+\epsilon}(0,T)}$.\end{lem}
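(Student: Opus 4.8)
The plan is to deduce the lemma from a single interpolation inequality, with the factor $T^{\epsilon/(1+\sigma+\epsilon)}$ produced by a Poincaré-type estimate on the interval $(0,T)$. The starting observation is that $\sigma$ is an interpolant of the indices $-1$ and $\sigma+\epsilon$: writing $\sigma=(1-\theta)(-1)+\theta(\sigma+\epsilon)$ forces $\theta=\tfrac{1+\sigma}{1+\sigma+\epsilon}$, hence $1-\theta=\tfrac{\epsilon}{1+\sigma+\epsilon}$, which is precisely the exponent in the statement. This is the reason the peculiar denominator $1+\sigma+\epsilon$ appears.

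Concretely, I would establish two ingredients and combine them. First, the interpolation inequality
\begin{equation*}\|h\|_{H^{\sigma}(0,T)}\le C\,\|h\|_{H^{-1}(0,T)}^{\,1-\theta}\,\|h\|_{H^{\sigma+\epsilon}(0,T)}^{\,\theta},\qquad \theta=\tfrac{1+\sigma}{1+\sigma+\epsilon},\end{equation*}
with a constant $C$ independent of $T$. Second, the elementary bound $\|h\|_{H^{-1}(0,T)}\le T\,\|h\|_{L^2(0,T)}$, which follows by testing against $\psi\in H^1_0(0,T)$ and using the Poincaré inequality $\|\psi\|_{L^2(0,T)}\le\frac{T}{\pi}\|\psi'\|_{L^2(0,T)}$ (so in fact with constant $1/\pi<1$), together with the trivial embedding $\|h\|_{L^2(0,T)}\le\|h\|_{H^{\sigma+\epsilon}(0,T)}$ coming directly from the definition of the restriction norm as an infimum over extensions. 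Inserting the second ingredient into the first and recalling $1-\theta=\tfrac{\epsilon}{1+\sigma+\epsilon}$ yields
\begin{equation*}\|h\|_{H^{\sigma}(0,T)}\le C\bigl(T\|h\|_{H^{\sigma+\epsilon}(0,T)}\bigr)^{1-\theta}\|h\|_{H^{\sigma+\epsilon}(0,T)}^{\theta}=C\,T^{\frac{\epsilon}{1+\sigma+\epsilon}}\,\|h\|_{H^{\sigma+\epsilon}(0,T)},\end{equation*}
which is the claimed inequality up to the constant $C$.

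The main technical point — and the step I expect to be the real obstacle — is the interpolation inequality on the bounded interval with a constant independent of $T$. On all of $\mathbb{R}$ it is immediate and sharp: by Plancherel and Hölder applied to $\int(1+\xi^2)^{\sigma}|\hat h|^2=\int\big[(1+\xi^2)^{\sigma+\epsilon}|\hat h|^2\big]^{\theta}\big[(1+\xi^2)^{-1}|\hat h|^2\big]^{1-\theta}$, with constant exactly $1$, and then the Poincaré step can be run on $\mathbb{R}$ for a compactly supported function. To transfer this to $(0,T)$ I would use an extension operator $E\colon H^{\sigma+\epsilon}(0,T)\to H^{\sigma+\epsilon}(\mathbb{R})$ that is simultaneously bounded on $L^2$ and has image in a fixed dilate of $(0,T)$ (a reflection-type extension has operator norm independent of $T$), so that the $H^{-1}$/Poincaré estimate applies to the compactly supported function $Eh$; alternatively one may cite the standard fact that $H^{\sigma}(0,T)=[H^{-1}(0,T),H^{\sigma+\epsilon}(0,T)]_{\theta}$ with equivalent norms, being mindful of the choice of $H^{-1}(0,T)$ and of the half-integer endpoints of $\sigma$ and $\sigma+\epsilon$. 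In either case one should also treat the easy regime $T\ge 1$ separately, where the bare embedding $H^{\sigma+\epsilon}(0,T)\hookrightarrow H^{\sigma}(0,T)$ holds with constant $1$ and already gives the inequality. I should note that for the use of this lemma in Section~\ref{SectionNonlin2} only a \emph{positive} power of $T$ is needed, so the precise value of the constant is immaterial; recovering the clean constant-$1$ form requires the definitions of the fractional spaces on $(0,T)$ to cooperate.
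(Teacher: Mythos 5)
Your proposal is correct and follows essentially the same route as the paper: interpolation of $H^{\sigma}(0,T)$ between $H^{-1}(0,T)$ and $H^{\sigma+\epsilon}(0,T)$ with $1-\theta=\frac{\epsilon}{1+\sigma+\epsilon}$, combined with the Poincaré-type bound $\|h\|_{H^{-1}(0,T)}\le T\|h\|_{L^2(0,T)}$ (which the paper obtains via the antiderivative $H(t)=\int_0^t h$ and Cauchy--Schwarz, equivalent to your duality argument). The only difference is that you track the interpolation constant and its possible $T$-dependence more carefully than the paper, which simply cites Lions--Magenes and writes the inequality with constant $1$; as you note, a harmless constant suffices for the application in Section~\ref{SectionNonlin2}.
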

\begin{proof}
Let $H(t):=\int_0^t h(s)ds$ . Applying the Cauchy-Schwartz inequality we get
$\|H\|_{L^2(0,T)}^2\leq\int_0^T\left(\int_0^T|h(s)|ds\right)^2 dt\leq T^2\|h\|_{L^2(0,T)}^2$. On the other hand $H'=h$, which implies $\|h\|_{H^{-1}(0,T)}\leq \|H\|_{L^2(0,T)}$, hence $\|h\|_{H^{-1}(0,T)}\leq T\|h\|_{L^2(0,T)}$. By interpolation theorem \cite[Theorem 12.4, Proposition 2.3]{LionMag}, $\|h\|_{H^\sigma}\le \|h\|_{H^{-1}}^{\theta}\cdot\|h\|_{H^{\sigma+\epsilon}}^{1-\theta}$, in which $\theta=\frac{\epsilon}{1+\sigma+\epsilon}$.
Hence we obtain $\|h\|_{H^\sigma}\le T^\theta\|h\|^\theta_{L^2(0,T)}\cdot\|h\|_{H^{\sigma+\epsilon}}^{1-\theta}\leq T^\theta\|h\|_{H^{\sigma+\epsilon}}.$
\end{proof}

Let us first consider the case $r$ being an odd integer.  In this case, we assume $r>\frac{2s-1}{4}$.  Now, if $\frac{2s-1}{4}<\frac{1}{2}$, then we can choose $\epsilon=\frac{1}{2}$ so that $\frac{1}{2}<\frac{2s-1}{4}+\epsilon< 1\le r$.  If $\frac{2s-1}{4}>\frac{1}{2}$, then we can choose $\epsilon$ sufficiently small so that we again have  $\frac{1}{2}<\frac{2s-1}{4}+\epsilon\le r$.

Secondly, let us consider the situation for $r>0$ being a non-integer.  In this case, we assume $[r]\ge \left[\frac{2s-1}{4}\right]$.  If $\frac{2s-1}{4}<\frac{1}{2}$ then we choose $\epsilon=\frac{1}{2}$ so that $\frac{1}{2}<\frac{2s-1}{4}+\epsilon$.  If $\frac{2s-1}{4}>\frac{1}{2}$, then we choose $\epsilon$ sufficiently small so that $\left[\frac{2s-1}{4}+\epsilon\right]=\left[\frac{2s-1}{4}\right].$

If $r$ is even and $\frac{2s-1}{4}<\frac{1}{2}$, then again we choose $\epsilon=\frac{1}{2}$.

Now, given $u\in X_T^s$, we know that $u(0,\cdot)$ in particular belongs to the space $H^{\frac{2s-1}{4}+\epsilon}(0,T)$ for any $\epsilon\in(0,\frac{1}{2}]$.  So, let us take an extension of $u(0,\cdot)\in H^{\frac{2s-1}{4}+\epsilon}(0,T)$, say  $U\in H^{\frac{2s-1}{4}+\epsilon}$, so that \begin{equation}\label{Uvsu}\|U\|_{H^{\frac{2s-1}{4}+\epsilon}}\le 2\|u(0,\cdot)\|_{H^{\frac{2s-1}{4}+\epsilon}(0,T)},\end{equation}  see \eqref{infDef}.  Now, $|U|^rU$ is an extension of $|u(0,\cdot)|^ru(0,\cdot)$, and therefore $$\||u(0,\cdot)|^ru(0,\cdot)\|_{H^{\frac{2s-1}{4}+\epsilon}(0,T)}\le \||U|^rU\|_{H^{\frac{2s-1}{4}+\epsilon}}\lesssim \|U\|_{H^{\frac{2s-1}{4}+\epsilon}}^{r+1}$$ by Lemma \ref{fEst}.  By using the inequality \eqref{Uvsu}, we have $$\|U\|_{H^{\frac{2s-1}{4}+\epsilon}}^{r+1}\lesssim \|u(0,\cdot)\|_{H^{\frac{2s-1}{4}+\epsilon}(0,T)}^{r+1}.$$ Combining the above estimates, we arrive at
$$\||u(0,\cdot)|^ru(0,\cdot)\|_{H^{\frac{2s-1}{4}+\epsilon}(0,T)}\lesssim \|u(0,\cdot)\|_{H^{\frac{2s-1}{4}+\epsilon}(0,T)}^{r+1}\le \|u(0,\cdot)\|_{H^{\frac{2s+1}{4}}(0,T)}^{r+1}.$$

Finally, we deduce that
\begin{multline}\label{hNonlinearEst}\|W_b(\cdot)[h(u(0,\cdot)]_e\|_{X_T^s}\le C(1+T)\||u(0,\cdot)|^ru(0,\cdot)\|_{H^{\frac{2s-1}{4}}(0,T)}\\
\le C(1+T)T^{\frac{4\epsilon}{2s+3+4\epsilon}}\||u(0,\cdot)|^ru(0,\cdot)\|_{H^{\frac{2s-1}{4}+\epsilon}(0,T)}\\
\le C(1+T)T^{\frac{4\epsilon}{2s+3+4\epsilon}}\|u(0,\cdot)\|_{H^{\frac{2s+1}{4}}(0,T)}^{r+1}\\
\le C(1+T)T^{\frac{4\epsilon}{2s+3+4\epsilon}}\sup_{x\in\mathbb{R}_+}\|u(x,\cdot)\|_{H^{\frac{2s+1}{4}}(0,T)}^{r+1}\\
\le C(1+T)T^{\frac{4\epsilon}{2s+3+4\epsilon}}\|u\|_{X_T^s}^{r+1}.\end{multline}

We can estimate the differences similarly.  Namely, for any given $u,v\in X_T^s$, we have
\begin{multline}\label{hNonlinearEst2}\|W_b(\cdot)[h(u(0,\cdot)-h(v(0,\cdot)]_e\|_{X_T^s}\\
\le C(1+T)T^{\frac{4\epsilon}{2s+3+4\epsilon}}\left(\|u\|_{X_T^s}^{r}+\|v\|_{X_T^s}^{r}\right)\|u-v\|_{X_T^s}.\end{multline}

\subsubsection*{Local Existence}

Following the arguments in Section \ref{LocExt} and using the estimate \eqref{hNonlinearEst}, we have $$\|\Psi(u)\|_{X_T^s}\le C\left(\|u_0\|_{H^s(\mathbb{R}_+)}+(1+T)T^{\frac{4\epsilon}{2s+3+4\epsilon}}\|u\|_{X_T^s}^{r+1}+T\|u\|_{X_T^s}^{p+1}\right).$$

On the other hand, using the estimate \eqref{hNonlinearEst2} and the arguments in Section \ref{LocExt} for differences, we obtain

\begin{multline}\|\Psi(u)-\Psi(v)\|_{X_T^s}\le C\left(T(\|u(\tau)\|_{X_T^s}^p+\|v(\tau)\|_{X_T^s}^p)\|u(\tau)-v(\tau)\|_{X_T^s}\right.\\
\left.+(1+T)T^{\frac{4\epsilon}{2s+3+4\epsilon}}\left(\|u\|_{X_T^s}^{r}+\|v\|_{X_T^s}^{r}\right)\|u-v\|_{X_T^s}\right).\end{multline}

Now, let $A:=C\left(\|u_0\|_{H^s(\mathbb{R}_+)}\right)$, $R=2A$ and $T$ small enough that $$A+C(1+T)T^{\frac{4\epsilon}{2s+3+4\epsilon}}R^{r+1}+CTR^{p+1}<2A.$$  Now, if necessary we can choose $T$ even smaller so that $\Psi$ becomes a contraction on $\bar{B}_R(0)\subset X_T^s$, which is a complete space.  Hence, $\Psi$ must have a unique fixed point in $\bar{B}_R(0)$ when we look for a solution whose lifespan is sufficiently small.
\subsubsection*{Uniqueness}  In order to prove uniqueness, we proceed as in Section \ref{Uniq}, taking into account that the boundary forcing now depends on the solution itself.  So, let $u_1,u_2\in X_{T_0}^s$ be two solutions of \eqref{MainProb}.  Then, \begin{multline}u_1(t)-u_2(t)=-i\int_0^tW_\mathbb{R}(t-s)[f(u_1^*(s))-f(u_2^*(s))]ds\\
+W_b(t)\left([h(u_1(0,\cdot))-h(u_2(0,\cdot))+p(u_2^*)-p(u_1^*)]_e\right)\end{multline} for a.a. $t\in [0,T_0]$.  Then, \begin{multline}\|u_1(t)-u_2(t)\|_{H^s}\\
\le \int_0^{T_0}\|f(u_1^*(s))-f(u_2^*(s))\|_{H^s}+C(1+T_0)\|p(u_2^*)-p(u_1^*)\|_{H^{\frac{2s-1}{4}}(0,T)}\\
+C(1+T_0)T_0^{\frac{4\epsilon}{2s+3+4\epsilon}}\left(\|u_1\|_{X_{T_0}^s}^{r}+\|u_2\|_{X_{T_0}^s}^{r}\right)\|u_1-u_2\|_{H^s}.\end{multline}  Now, choosing $T_0$ sufficiently small, we can subtract the last term above from the left hand side, estimate the rest of terms at the right hand side as in Section \ref{Uniq}, and then use the Gronwall's inequality to obtain $\|u_1(t)-u_2(t)\|_{H^s}=0$.

\subsubsection*{Continuous Dependence}  The proof of continuous dependence can be done as in Section \ref{ContDep} by taking into account that $h$ is now a function of $u(0,t)$.  For this closed loop problem, the estimate \eqref{ContDep01} takes the following form.
\begin{equation}\label{ContDep02}\|u-v\|_{X_{T_0}^s}\le C\|u_0-v_0\|_{H^s(\mathbb{R}_+)}\end{equation} for sufficiently small $T_0$.  Of course, for the closed loop problem $B$ is taken as a subset of $H^s(\mathbb{R}_+)$ with finite diameter.
\subsubsection*{Blow-up Alternative}  The proof of the blow-up alternative is almost identical to the proof given in Section \ref{BlowSec}, and is therefore omitted here.  The only modification is that the parameter $\delta$ in the proof given in Section \ref{BlowSec} now depends only on $M$.

\end{document}